\documentclass[reqno,11pt]{amsart}
\usepackage{amsmath,amssymb,color}

\addtolength{\textwidth}{2.2cm} 
\addtolength{\oddsidemargin}{-1.1cm}
\addtolength{\evensidemargin}{-1.1cm}
\addtolength{\topmargin}{-.75cm}


\newtheorem{theorem}{Theorem}[section]
\newtheorem{proposition}[theorem]{Proposition}

\newtheorem{lemma}[theorem]{Lemma}
\newtheorem{corollary}[theorem]{Corollary}

\newtheorem{remark}[theorem]{Remark}

\numberwithin{equation}{section}
\numberwithin{theorem}{section}

\renewcommand{\phi}{\varphi}

\newcommand{\eps}{\varepsilon}

\newcommand{\nada}[1]{}

\newcommand{\R}{\mathbb R}
\renewcommand{\O}{\mathcal O}
\renewcommand{\S}{\mathbb S}


\newfont{\indic}{bbmss12}

\definecolor{light}{gray}{.97}


\title{On the regularity of timelike extremal surfaces}

\author{R. L. Jerrard \and M. Novaga \and G. Orlandi}
\address{Department of Mathematics, University of Toronto, Toronto, Ontario, Canada}\email{rjerrard@math.toronto.edu}
\address{Dipartimento di Matematica, Universit\`a di Padova, Padova, Italy}\email{novaga@math.unipd.it}
\address{Dipartimento di Informatica, Universit\`a di Verona, Verona, Italy}\email{giandomenico.orlandi@univr.it}

\date{}

\begin{document}

\begin{abstract}
We study  a class of timelike weakly  extremal 
surfaces in flat Minkowski space $\R^{1+n}$,
characterized by the
fact that they admit a $C^1$  parametrization 
(in general not an immersion)
of a specific form. We prove that if the distinguished
parametrization is of class $C^k$, then the surface is regularly immersed 
away from a closed singular 
set of euclidean Hausdorff dimension at most $1+1/k$, and that 
this bound is sharp. 
We also show that, generically with respect
to a natural topology, 
the singular set
of a timelike weakly extremal cylinder in $\R^{1+n}$ is $1$-dimensional
if $n=2$, and it is empty 
if $n \ge 4$.
For $n=3$, timelike weakly extremal surfaces exhibit an intermediate behavior.
\end{abstract}

\maketitle

\section{Introduction}

In this paper we study timelike extremal surfaces in 
$(1+n)$-dimensional flat Minkowski space. In particular, we focus on extremal
immersions of a cylinder $\R\times\S^1$ into $\R^{1+n}$, which arise in models of 
closed cosmic strings and have been extensively studied in the physics
community (see \cite{BoIn:34,VS,An:03} and references therein), as well as
in the more recent mathematical literature
\cite{Neu,Li:04,Mi:08,KZZ,KZ,BHNO,J,NT}, and have recently
been proved \cite{BNO1,J} to describe the dynamics of topological defects in various
relativistic field theories in certain scaling limits.

As for many geometric problems, timelike extremal surfaces present various
kinds of singularities. For instance, it has been shown in \cite{BHNO} that a
closed convex string in $\R^2$ with zero initial velocity shrinks to a point
in finite time, while its shape approaches that of a circle. An analogous phenomenon
can be found in other geometric evolutions such as the planar curvature flow
\cite{GH} and the hyperbolic curvature flow of convex curves \cite{KKW}.
However more complicated singularities can occur during the evolution
(typically the formation of cusps), and a partial classification has been
provided in \cite{EH}, where the authors study self-similar singularity
formation.
A theory of generalized extremal surfaces in the varifolds sense (see
\cite{allard}) has been recently proposed in \cite{BNO1,BNO2}.

In \cite{NT} it has been shown that any (immersed) timelike extremal cylinder 
in $\R^{1+2}$ necessarily develops singularities in finite time. 
In the same paper, the authors conjecture that this does not hold in $\R^{1+n}$
for $n\ge 3$, where existence of smooth timelike extremal cylinders is expected.
On the other hand, there exist globally smooth timelike extremal
surfaces with noncompact slices in $\R^{1+2}$, which are small perturbations of timelike
planes \cite{Li:04, KZZ}. 

The arguments of \cite{NT} rely heavily on a particular representation of 
extremal immersed cylinders, which we call the
{\em orthogonal gauge},
known for a long time in the physics literature and first
proved to be valid, as far as we know, in \cite{BHNO}. 
This representation also yields global weak solutions
in the sense of \cite{BNO1, BNO2}. 
The main goal of this paper is
to estimate the dimension of the singular set of these 
weak solutions, which have the good property that they
are images of $C^1$ maps (in general {\em not immersions})
of a specific form, see \eqref{eqrepr}, \eqref{vincoloab} below.
In particular we prove that, if the map is of class
$C^k$, then the dimension of the singular set 
is bounded above by $1+\frac 1 k$,
and the bound
is sharp.
%
The upper bound on the dimension turns out to follow immediately
from a classical refinement of Sard's Theorem, due to Federer \cite{federer}, so
the construction of examples of {extremal} surfaces attaining this bound is the
harder part of this result.

We also show that the singular set is generically empty when $n>3$, 
confirming the conjecture of \cite{NT} in such dimensions. More precisely we
show that, generically for $n>3$, given a closed curve $\Gamma$ immersed in
$\R^n$ and a velocity field $v:\Gamma\to \R^n$, with $|v|<1$ and orthogonal
to $\Gamma$, there exists a smooth globally immersed timelike extremal
surface containing $\Gamma$ and tangent to $(1,v)$.
For $n=3$, roughly speaking, both globally smooth immersed solutions
and solutions that develop singularities occur for
large sets of initial data (that is, sets with nonempty interior.)

We start in Section \ref{S:2} by quickly recalling some properties of the orthogonal
gauge, including existence and (restricted) uniqueness of solutions
of a Cauchy problem for timelike extremal surfaces.
We also present some examples in Section \ref{S:4} showing that 
uniqueness may fail without the restrictions imposed in Section \ref{S:2}.

\section{Timelike extremal surfaces in the orthogonal gauge}\label{S:2}

Given an open interval $I\subset\R$,
and an immersion 
$\psi: I\times \R \to \R^{1+n}$, possibly periodic with respect to the second variable, 
for an open set $U\subset I\times \R$ we define the Minkoswkian area of
$\psi(U)$  to be
\[
\int_{U} \sqrt{| g|}\ ,
\qquad
g := \det (g_{ij}),
\qquad g_{ij} := (\partial_i \psi, \partial_j\psi)_m 
\]
where $(\cdot, \cdot)_m$ denotes the Minkowski inner product. 
This functional is also sometimes called the Nambu-Goto action.
The surface parametrized by $\psi$
is said to be {\em timelike} if $g<0$ everywhere, and a timelike surface
is {\em extremal} if $\psi$ is a critical point of the Minkowskian area
functional with respect to compactly supported variations.

It is noted in \cite{NT} that any timelike immersion of a surface into $\R^{1+n}$
can be reparametrized locally to have the form  
\begin{equation}
\psi(t,x) = (t, \gamma(t,x)). 
\label{formofpsi}\end{equation}
Here we will consider the initial value problem for timelike extremal surfaces
with initial data of the form
\begin{equation}
\gamma(0,x) = \gamma_0, \quad
\gamma_t(0,x) = v_0,
\label{initconduno}\end{equation}
where $\gamma_0\in C^1(\R ; \R^n)$ is an
immersion and $v_0\in C^0(\R;\R^n)$ satisfies $v_0\cdot \gamma_{0}' = 0$ and $|v_0|<1$ everywhere. 
We  call such a pair an {\em admissible couple}, and we say that an admissible couple is {\it periodic} if 
$\gamma_0$ and $v_0$ are periodic with the same period, which implies
in particular that $\gamma_0$ parametrizes a closed curve.
We remark that if $\gamma_0$ is an embedding, or more generally
if $v_0\circ \gamma_0^{-1}$ is single-valued 
on $\mbox{Image}(\gamma_0)$, then the initial condition \eqref{initconduno}
can be restated in the form
\begin{equation}
\mbox{$\gamma_0$ parametrizes $\{x \in \R^ n : (0,x)\in M \}$,\ and $(1, v_0(x))\in T_{\psi(0,x)}M$ for every $x\in \R$.}
\label{geometric.ic}\end{equation}
Two admissible couples  $(\gamma_0,v_0)$, $(\hat \gamma_0, \hat v_0)$ are considered to be equivalent if there is a $C^1$ diffeomorphism 
$\lambda:\R\to \R$ such that $(\gamma_0,v_0) =
(\hat \gamma_0, \hat v_0)\circ \lambda $. Equivalent 
couples encode exactly the same geometric data, and 
to any timelike surface $M$, whose $t=0$ slice is an immersed curve,
one can assign an (equivalence class of) admissible couples,
indeed possibly multiple equivalence classes if the curve is
not embedded.

Our approach is  based on the observation, classical in the physics literature and straightforward
to verify (see \cite{VS,BHNO}),
that if $\gamma\in C^k( I\times \R; \R^{n})$, $k\ge 1$ satisfies
\begin{eqnarray}\label{vincolozero}
\label{vincolouno}
|\gamma_x|^2 - |\gamma_t|^2&=&1
\\\label{vincolodue}
\gamma_x\cdot\gamma_t&=&0
\\\label{onde}
\gamma_{tt} - \gamma_{xx} &=& 0 
\end{eqnarray}
for all $(t,x)\in I\times \R$, then $\psi(x,t) = (t, \gamma(t,x))$
is a solution of the Euler-Lagrange equations associated to the Minkowski
area functional wherever $g\ne 0$, and hence is an extremal immersion near such points.
This holds in 
the distributional sense if $k=1$ and classically if $k\ge 2$.
In view of \eqref{vincolouno}-\eqref{vincolodue},
we will call such a parametrization the {\it orthogonal gauge}.

The general solution $\gamma$ of
\eqref{vincolouno} - \eqref{onde} has the form
\begin{equation}\label{eqrepr}
\gamma(t,x)=\frac{a(x+t)+b(x-t)}{2}
\end{equation}
where $a,b\in C^1(\R;\R^n)$ are maps satisfying 
\begin{equation}\label{vincoloab}
|a'|=|b'|=1 \quad {\rm in\ }\R .
\end{equation}
Indeed, \eqref{eqrepr} is just d'Alembert's formula, and once $\gamma$ is known
to have the form \eqref{eqrepr}, then the constraints \eqref{vincolouno}, \eqref{vincolodue}
are easily seen to be equivalent to \eqref{vincoloab}.

Given a function $\gamma(t,x) = \frac 12( a(x+t)+b(x-t))$, with $a,b$ satisfying \eqref{vincoloab},
we shall write in the sequel $\psi(t,x) := (t,\gamma(t,x))$ and $M := \mbox{Image}(\psi)$.
We also define the singular set of $M$ as
\[
Sing := \{ \psi(t,x) : \mbox{rank}(\nabla \psi)(t,x) < 2 \}  \ = \ \{ \psi(t,x) : \gamma_x(t,x) = 0\}.
\]
We have that $M$ is timelike and regularly immersed in an open neighborhood of every  point of $M\setminus Sing$, 
while, at every point of $Sing$, the orthogonal coordinate system degenerates and,
as we will prove in Theorem \ref{teoex}
below, $M$ fails to be timelike. A stricter notion of singular set is
\[
Sing^* := \{ p\in Sing : \lim_{q\in M, q\to p} \tau(q)\mbox{ does not exist} \},
\]
where $\tau(\cdot)$ is the (spatial) tangent
\[
\tau(p) =  \frac{\gamma_x}{|\gamma_x|}\circ \psi^{-1}(p)
\]
defined wherever it makes sense, which is at points $p\in M\setminus Sing$ where
the set $\{\frac{\gamma_x}{|\gamma_x|}(t,x) :
\psi(t,x)=p\}$ consists of exactly one element. 

We note that the definitions of $Sing$ and  $Sing^*$  both have the drawback that they depend on
the parametrization of $M$.

We collect some known results in the following

\begin{proposition}\label{prop:summary}
Given an admissible couple $(\hat \gamma_0, \hat v_0)\in C^k\times C^{k-1}$, there exists 
an equivalent admissible couple $(\gamma_0, v_0)$ and a 
map $\gamma\in C^k(\R\times \R; \R^n)$ of the form \eqref{eqrepr}, \eqref{vincoloab}, such that the initial condition \eqref{initconduno}
holds. In addition,

{\bf 1}. 
$\psi(t,x) = (t, \gamma(t,x))$ is timelike and an immersion in a neighborhood of every point where
$\gamma_x\ne 0$, and it is neither timelike nor an immersion at points where $\gamma_x$ vanishes.

{\bf 2}. 
$\psi$ is an extremal immersion wherever it is a immersion, 
and in particular this holds for $(t,x)$ in a neighborhood of $\{0\}\times \R$.

{\bf 3}. 
If $\hat \psi$ is any extremal immersion of the form
$\hat\psi(t,x) = (t, \hat\gamma(t,x))$ for $(t,x)\in I\times \R$ for some interval $I\subset \R$
containing $0$, and if
$(\hat \gamma(0,\cdot), \hat \gamma_t(0,\cdot))$ is equivalent to $(\gamma_0, v_0)$, then
$\psi$ is a reparametrization of $\hat \psi$, and thus $\psi(I\times \R) 
= \hat \psi(I\times \R)$.

{\bf 4}. 
$M = \mbox{Image}(\psi)$ can be identified with a global
weak solution of the extremal
surface equation, in the sense of \cite{BNO1,BNO2}.
\end{proposition}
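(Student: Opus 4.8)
The plan is to prove each of the four assertions in turn, treating them as a bookkeeping exercise that assembles results already recalled in the excerpt together with d'Alembert's formula \eqref{eqrepr}. The starting point is the reduction to the orthogonal gauge: given $(\hat\gamma_0,\hat v_0)\in C^k\times C^{k-1}$, I would reparametrize by arclength (using $|\hat\gamma_0'|>0$ since $\hat\gamma_0$ is an immersion) to obtain an equivalent couple $(\gamma_0,v_0)$ with $|\gamma_0'|^2-|v_0|^2=1$ and $\gamma_0'\cdot v_0=0$; periodicity is preserved since arclength reparametrization respects periods. Then I would \emph{define} $a(s):=\gamma_0(s)+\int_0^s v_0$ and $b(s):=\gamma_0(s)-\int_0^s v_0$, check from the admissibility relations that $|a'|=|b'|=1$ (this is exactly the pointwise identity $|\gamma_0'\pm v_0|^2 = |\gamma_0'|^2\pm 2\gamma_0'\cdot v_0 + |v_0|^2 = 1$), and set $\gamma(t,x):=\tfrac12(a(x+t)+b(x-t))$. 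Regularity $\gamma\in C^k$ follows because $a,b\in C^k$: indeed $a',b'\in C^{k-1}$ by construction, so $a,b\in C^k$. One verifies $\gamma(0,x)=\gamma_0(x)$ and $\gamma_t(0,x)=\tfrac12(a'(x)-b'(x))=v_0(x)$, so \eqref{initconduno} holds.

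For assertion \textbf{1}, a direct computation gives $\gamma_x=\tfrac12(a'(x+t)+b'(x-t))$ and $\gamma_t=\tfrac12(a'(x+t)-b'(x-t))$, whence $|\gamma_x|^2-|\gamma_t|^2 = a'(x+t)\cdot b'(x-t)$ has modulus $\le 1$ by Cauchy--Schwarz and \eqref{vincoloab}, and $g = -|\gamma_x|^2 \le 0$ after expanding the $2\times2$ Gram determinant of $\psi=(t,\gamma)$ using $|\gamma_x|^2-|\gamma_t|^2 + (\text{cross terms})$; one finds $g=-|\gamma_x|^2$ exactly. Thus $g<0$ (timelike, and $\psi$ an immersion since $\partial_t\psi$ and $\partial_x\psi$ are then independent) precisely where $\gamma_x\ne0$, and $g=0$ where $\gamma_x=0$; at such points $\partial_x\psi=(0,0)$ vanishes so $\psi$ is not an immersion either. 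Since $\gamma_x(0,x)=\tfrac12(a'(x)+b'(x))$ and $|\gamma_x(0,x)|^2 = \tfrac14(2 + 2a'(x)\cdot b'(x)) = \tfrac14(2 + 2(|\gamma_0'|^2-|v_0|^2)(x)\cdot\ldots)$—more simply, $\gamma_x(0,x)=\gamma_0'(x)\ne 0$—there is a neighborhood of $\{0\}\times\R$ on which $\psi$ is a timelike immersion. Assertion \textbf{2} is then immediate from the italicized statement in the excerpt: $\gamma$ satisfies \eqref{vincolouno}--\eqref{onde} (the wave equation holds because $\gamma$ has the d'Alembert form; the constraints hold by \eqref{vincoloab}), hence $\psi$ solves the Euler--Lagrange equations wherever $g\ne0$, i.e.\ wherever it is an immersion, and this set contains a neighborhood of $\{0\}\times\R$ by assertion \textbf{1}.

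Assertion \textbf{3} is the uniqueness statement, and I expect this to be the main obstacle, since it is the only part not reducible to a one-line computation. The strategy: given $\hat\psi(t,x)=(t,\hat\gamma(t,x))$ extremal on $I\times\R$ with $(\hat\gamma(0,\cdot),\hat\gamma_t(0,\cdot))$ equivalent to $(\gamma_0,v_0)$, one first applies a diffeomorphism in $x$ so that the initial data agree \emph{on the nose}; then, using that $\hat\psi$ is a timelike extremal immersion near $t=0$, one can reparametrize $\hat\psi$ into its own orthogonal gauge on a neighborhood of $\{0\}\times\R$ (this is the local reduction of \cite{NT,BHNO} already invoked in the excerpt), obtaining $\hat\gamma$ of the form $\tfrac12(\hat a(x+t)+\hat b(x-t))$ with $|\hat a'|=|\hat b'|=1$. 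Matching initial data forces $\hat a'=a'$, $\hat b'=b'$ up to constants of integration that are absorbed, so $\hat\gamma=\gamma$, i.e.\ $\psi$ is a reparametrization of $\hat\psi$; propagating this identification along characteristics across all of $I$ (the constraint set $|a'|=|b'|=1$ is preserved, and the reparametrization extends as long as $\hat\psi$ stays an immersion—but wherever it does, the argument applies, and the images agree regardless) yields $\psi(I\times\R)=\hat\psi(I\times\R)$. Care is needed about the non-embedded case, where the equivalence class of the admissible couple is not unique; but the hypothesis fixes one such couple, and the argument is carried out for that choice. Finally, assertion \textbf{4} is quoted verbatim from \cite{BNO1,BNO2}: a map $\gamma$ of the form \eqref{eqrepr}--\eqref{vincoloab} produces, via $M=\mathrm{Image}(\psi)$, an integral varifold satisfying the first-variation identity for the Nambu--Goto action in the generalized sense of those papers, so there is nothing further to prove beyond citing the relevant construction.
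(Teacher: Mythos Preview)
Your overall strategy matches the paper's: reparametrize to the orthogonal gauge, write down $a,b$ explicitly, invoke d'Alembert, compute $g$, and cite \cite{BHNO} and \cite{BNO1,BNO2} for {\bf 3} and {\bf 4}. Two points need correction.

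First, the normalization is wrong. You write $|\gamma_0'|^2-|v_0|^2=1$, but your own computation $|\gamma_0'\pm v_0|^2=|\gamma_0'|^2+|v_0|^2=1$ (using $\gamma_0'\cdot v_0=0$) requires the \emph{plus} sign; this is exactly the paper's \eqref{normalize.data}. Moreover, ``reparametrize by arclength'' does not achieve this: arclength gives $|\gamma_0'|=1$, whereas the correct reparametrization has $\lambda'=\sqrt{1-|\hat v_0|^2}/|\hat\gamma_0'|$, so that $|\gamma_0'|^2=1-|v_0|^2$. Relatedly, $g=-|\gamma_x|^4$, not $-|\gamma_x|^2$; this does not affect the sign, but the computation as written is off.

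Second, and more substantively, your treatment of ``not timelike'' at points where $\gamma_x=0$ is incomplete. You argue that $g=0$ there, but this only shows the \emph{parametrization} $\psi$ degenerates. The image $M$ may still be a smooth submanifold near $p=\psi(t,x)$ with a well-defined tangent plane $T_pM$, and one must rule out that $T_pM$ is timelike. The paper flags this as ``the only subtle part'' and defers it to the proof of Theorem~\ref{teoex}: if the spatial unit tangent $\tau$ extends continuously to $p$ (i.e.\ $p\notin Sing^*$), then $T_pM$ is spanned by $(0,\tau(p))$ and $(1,\gamma_t(t,x))$ with $|\gamma_t(t,x)|=1$, and one shows $\tau(p)\cdot\gamma_t(t,x)=0$ via a limiting argument on the sphere, forcing $T_pM$ to be null rather than timelike. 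Your proposal omits this argument entirely.
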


This proposition implies in particular the local existence of a smooth 
timelike extremal surface
$M$ satisfying the initial condition \eqref{geometric.ic} for an admissible couple
$(\gamma_0, v_0)$ such that $v_0\circ \gamma_0^{-1}$ is single-valued, as well as 
the global existence of a weak solution.

We show in Proposition \ref{prop:nonuniq} below that the restriction
of the uniqueness assertion {\bf 3} to the class of surfaces parametrized by
maps to the form \eqref{formofpsi} is in fact necessary; without this condition,
uniqueness can fail.

\begin{proof}
Given any admissible couple $(\hat \gamma_0, \hat v_0)$, we can always find
an equivalent couple $(\gamma_0, v_0)$ such that
\begin{equation}\label{normalize.data}
|\gamma_{0}'|^2 + |v_0|^2 = 1.
\end{equation}
Letting $\gamma$ denote the solution of the wave equation \eqref{onde} with
initial data  \eqref{initconduno},
it is easy to check (see for example \eqref{eq:aprime} and \eqref{eq:bprime} below) that 
$\gamma$ satisfies \eqref{eqrepr}, \eqref{vincoloab}, thus proving the existence of
an extremal immersion for the admissible couple $(\gamma_0, v_0)$.

The proof of  {\bf 1} is given in the proof of Theorem \ref{teoex} below.
The only subtle part is checking that $M$ is not timelike at $\psi(t,x)$,
if $\gamma_x(t,x) = 0$; everything else follows easily from the definitions
and \eqref{eqrepr}.

Concerning {\bf 2},
we have already noted that a straightforward
computation shows that $\psi$ is an extremal immersion wherever it is an immersion,
and it follows from {\bf 1} that $\psi$ is an immersion in a neighborhood of $\{0\}\times \R^n$.

Finally, conclusions {\bf 3} and {\bf 4} are established in \cite{BHNO} and \cite{BNO1, BNO2}
respectively. They are proved for $\gamma$ which is periodic in the $x$ variable, 
but both facts are essentially local (due to finite propagation speed) and so the proofs work without change 
in the general case.
\end{proof}

\begin{remark}
\rm In [15, Theorem 4.1], global existence of $C^2$ solutions 
is proved for an equation that, like (2-4)-(2-6), 
is equivalent to the equation for timelike extremal surfaces as long as
the surfaces associated to the solutions remain immersed. 
In this result, the orthogonal gauge is not imposed,
and the equations considered are thus nonlinear.
\end{remark}

We record  some standard formulas.
Differentiating \eqref{eqrepr} we obtain
\begin{eqnarray}\label{equno}
\gamma_x(t,x)&=&\frac{a'(x+t)+b'(x-t)}{2}
\\\label{eqdue}
\gamma_t(t,x)&=&\frac{a'(x+t)-b'(x-t)}{2}.
\end{eqnarray}
Letting $t=0$ in \eqref{equno} - \eqref{eqdue} and recalling \eqref{initconduno}, we deduce that
\begin{eqnarray}
a'(x)&=&
\gamma_{0}'(x) + v_0(x)
\label{eq:aprime}\\
b'(x)&=&
\gamma_{0}'(x) - v_0(x)
\label{eq:bprime}
\end{eqnarray}


We define a {\em cylinder} to be a set $M\subset I\times\R^n$ that can
be written  {\em globally} as the image of a map $\psi$
of the form \eqref{formofpsi}, 
where $\gamma(t, \cdot)$ is periodic with fixed period $E$ for every $t\in I$.

It is straightforward to check that if one starts with a representative
$(\hat \gamma_0, \hat v_0)$ of a periodic admissible couple such
that \eqref{normalize.data} does not hold,
with $(\hat \gamma_0, \hat v_0)$ periodic of period $L$,
then an equivalent couple  $(\gamma_0, v_0)$ that satisfies \eqref{normalize.data} 
is periodic  with period
\[
E_0  := \int_0^L \frac{|\hat \gamma_{0}'(x)|}{\sqrt{1-|\hat v_0(x))|^2}}\,dx.
\]
Then  $a+b$
is periodic, and  we see from \eqref{eq:aprime}, \eqref{eq:bprime} that 
$a', b'$ are periodic as well, all with period $E_0$.
Hence, if $(\hat \gamma_0, \hat v_0)$ is a periodic admissible
couple, then the surface associated to $(\hat \gamma_0, \hat v_0)$
by Proposition \ref{prop:summary} is a cylinder.



Notice that, given a solution $\gamma$, the corresponding couple $(a,b)$ is uniquely determined 
up to additive constants. In particular, the othogonal gauge provides a
one-to-one correspondence between the set of all equivalence classes of admissible couples and the set
\[ 
X \ := \ \big\{(a,b)\in C^1(\R;\R^n)\times C^1(\R;\R^n):\ 
a'+b'\ \mbox{never vanishes},  \ |a'| = |b'| =1 \big\}/\sim
\] 
where $(a,b)\sim (c,d)$ iff there exist $x_0\in \R$, $z_0\in\R^n$ and $\sigma_0\in \{\pm1\}$ such that 
\[
c(x)=a(\sigma_0 x+x_0)+z_0\qquad d(x)=b(\sigma_0 x+x_0)-z_0\qquad {\rm for\ all\ }x\in\R.
\]
Similarly, equivalence classes of periodic admissible couples are parametrized by
\[
X_{\rm per} 
\ = \  \big\{[(a,b)]\in X :\ 
\textrm{$a', b', a+b$ periodic with the same period} \big\}
\]
where $[\cdot ]$ denotes an equivalence class. 
When $(a,b)\in X_{\rm per}$, we shall denote by $E_0$ the common period of $a,\,b$.

We shall consider the topology induced by $C^1(\R;\R^n)\times C^1(\R;\R^n)$ on $X$ (or equivalently on the set of admissible couples)
and we refer to it as the $X$-topology.
We say that a property holds {\it generically} if it holds for all admissible couples
out of a closed set with empty interior with respect to this topology.

\section{Generic regularity}\label{sec:3}

In this section we study the regularity properties of extremal surfaces, which hold generically 
with respect to the $X$-topology. We start with a general regularity result
which follows directly from the orthogonal gauge parametrization.

\begin{theorem}\label{teoex}
Given an admissible couple $(\gamma_0,v_0)$, 
there exists a global timelike extremal surface $M$ of the form \eqref{formofpsi}, 
containing $\Gamma_0={\rm Image}(\gamma_0)$ and tangent to $(1,v_0)$, 
if and only if 
\begin{equation}\label{condab}
a'(s)\ne -b'(\sigma)\qquad {\rm for\ all\ }s,\sigma\in \R.
\end{equation}
If $(a,b)\in X_{\rm per}$ then $M$ is an extremal cylinder.
\end{theorem}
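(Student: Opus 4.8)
The plan is to prove the equivalence in two directions, treating \eqref{condab} as the condition that the map $\psi$ never degenerates, and then handle the periodic case as a corollary of the construction.

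First I would establish the ``if'' direction. Assume \eqref{condab} holds. By Proposition \ref{prop:summary} we already have a map $\gamma\in C^1$ of the form \eqref{eqrepr}, \eqref{vincoloab} satisfying the initial condition, so $\psi(t,x) = (t,\gamma(t,x))$ and $M = \mathrm{Image}(\psi)$ are defined. The key computation is to express the metric determinant $g$ in terms of $a'$ and $b'$. From \eqref{equno}, \eqref{eqdue} and \eqref{vincoloab} one computes
\[
g = -|\gamma_x|^2 = -\frac{1}{4}|a'(x+t) + b'(x-t)|^2 = -\frac{1 + a'(x+t)\cdot b'(x-t)}{2},
\]
using $|a'|^2 = |b'|^2 = 1$. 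Since $a'\cdot b' \in [-1,1]$ with equality to $-1$ precisely when $a'(x+t) = -b'(x-t)$, condition \eqref{condab} (with $s = x+t$, $\sigma = x-t$, which range over all of $\R^2$ as $(t,x)$ does) is exactly equivalent to $g < 0$ everywhere, i.e. $\gamma_x$ never vanishes. Then conclusions {\bf 1} and {\bf 2} of Proposition \ref{prop:summary} give that $\psi$ is everywhere a timelike extremal immersion; the containment of $\Gamma_0$ and tangency to $(1,v_0)$ are built into the initial condition \eqref{initconduno}. This also completes the deferred proof of part {\bf 1} of Proposition \ref{prop:summary}: where $\gamma_x(t,x) = 0$ we have $g = 0$, so $M$ is not timelike there, and $\nabla\psi$ has rank one so $\psi$ is not an immersion there.

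For the ``only if'' direction, suppose \eqref{condab} fails, so $a'(s_0) = -b'(\sigma_0)$ for some $s_0,\sigma_0$. Set $t_0 = (s_0-\sigma_0)/2$, $x_0 = (s_0+\sigma_0)/2$; then $\gamma_x(t_0,x_0) = 0$, so $\psi$ is not an immersion at $(t_0,x_0)$ and $M$ is not timelike there. The subtlety is that the theorem asserts \emph{no} global timelike extremal surface of the form \eqref{formofpsi} exists with the given data, not merely that this particular $\psi$ fails. Here I would invoke the uniqueness statement {\bf 3} of Proposition \ref{prop:summary}: any extremal immersion $\hat\psi$ of the form \eqref{formofpsi} realizing an equivalent admissible couple must be a reparametrization of $\psi$ on any interval $I\ni 0$ on which it is defined and immersed. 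If such a global $\hat\psi$ existed and were everywhere a timelike immersion, then $\psi$ would agree with it up to reparametrization on all of $\R\times\R$, forcing $\psi$ itself to be an immersion everywhere --- contradicting $\gamma_x(t_0,x_0) = 0$. (A mild point to address: a reparametrization changes $x$ but not $t$, since the form \eqref{formofpsi} pins the first coordinate, so the location of the degeneracy is genuinely intrinsic.) I expect this reduction to the uniqueness assertion to be the main obstacle, since one must be careful that the equivalence class of admissible couples, not the specific parametrization, is what is preserved, and that the degeneracy of $\gamma_x$ is a parametrization-independent obstruction to being timelike.

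Finally, the periodic assertion follows with little extra work: if $(a,b)\in X_{\rm per}$, then by definition $a'$, $b'$, and $a+b$ are periodic with common period $E_0$, hence $\gamma(t,\cdot) = \frac12(a(\cdot+t) + b(\cdot-t))$ is periodic in $x$ with period $E_0$ for every fixed $t$. Thus $M = \mathrm{Image}(\psi)$ is a cylinder in the sense defined above, and by the first part it is a timelike extremal surface, so it is an extremal cylinder. The membership $(a,b)\in X_{\rm per}\subset X$ already encodes that $a'+b'$ never vanishes, i.e. \eqref{condab} at equal arguments; but the full strength of \eqref{condab} for all $s,\sigma$ must still be assumed (or is hypothesized in the theorem statement), and under it the argument goes through verbatim.
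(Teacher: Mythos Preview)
Your ``if'' direction is essentially the same as the paper's: both observe that \eqref{condab} is equivalent to $\gamma_x$ never vanishing, whence $\psi$ is a global timelike immersion.

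Your ``only if'' direction, however, takes a different route and has a gap relative to what the paper actually proves. You argue by invoking the uniqueness assertion {\bf 3} of Proposition~\ref{prop:summary}: if a global timelike \emph{immersion} $\hat\psi$ of the form \eqref{formofpsi} existed, it would be related to $\psi$ by a reparametrization, forcing $\psi$ to be an immersion too. This is fine as far as it goes, but the paper interprets the theorem more broadly. Immediately after the statement it introduces an \emph{extended} notion of ``timelike'' for surfaces that are images of possibly non-immersive maps: $M$ is timelike at $p$ if $T_pM$ exists and is timelike and $\tau$ is continuous at $p$. Under this reading the question is not whether some immersed $\hat\psi$ exists, but whether the image $M=\psi(\R\times\R)$ itself could still be a smooth timelike surface at the degenerate point. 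Your uniqueness argument does not address this, since part {\bf 3} presupposes an immersed comparison map.

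The paper instead proves the geometric fact directly: at a point $p=\psi(t,x)$ with $\gamma_x(t,x)=0$, either $p\in Sing^*$ (so $\tau$ is discontinuous and $M$ is not timelike by definition), or one can define $\tau(p)$ and show that $\tau(p)\cdot\gamma_t(t,x)=0$, which forces $T_pM$ to be null rather than timelike. This last orthogonality is established by a limiting argument on the unit sphere and is the nontrivial content. Your remark that ``$g=0$, so $M$ is not timelike there'' conflates the pullback metric via $\psi$ with the tangent plane of the image: $g=0$ only says the parametrization degenerates, not that $T_pM$ (should it exist) fails to contain timelike vectors. This is exactly the ``subtle part'' that Proposition~\ref{prop:summary} flags as deferred to this proof, so your claim to have completed part {\bf 1} is not justified.

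Your treatment of the periodic case is fine and matches the paper's.
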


We have only defined {\em timelike} for immersed surfaces.
A surface $M$ given as the image of a map
$\psi$ may be smooth even where $\psi$ is not an immersion.
In this case, we say that $M$ is timelike at a point 
$p\in M$ if $T_pM$ exists and is timelike, and in addition the 
spatial unit tangent $\tau$ is continuous at $p$.

\begin{proof}
Assume \eqref{condab}. Then it is clear from the form \eqref{eqrepr} of $\gamma$ that 
$\gamma_x$ never vanishes, and from the form \eqref{formofpsi}
of $\psi$, it follows that
that $Sing = \emptyset$ and hence that $\psi$ is a global immersion. 
It follows from \eqref{vincolouno} that $|\gamma_t|<1$
whenever $\gamma_x\ne 0$, and from this it is easy to check
that $\psi$ is a timelike immersion everywhere.

If \eqref{condab} fails, then
$\gamma_x(t,x)=0$ for some $(t,x)\in\R\times\R$, and by \eqref{vincolouno} we have 
$|\gamma_t(t,x)|=1$.
We will show that $M$ is not timelike at $\psi(t,x)$.
This is clearly the case  if $p\in Sing^*$,
so we assume that $p\not \in Sing^*$. 
Then we can define a spatial tangent $\tau(p)$,
and $T_pM$ is spanned by $(0,\tau(p))$ and $( 1, \gamma_t(t,x))$. 
Thus it  suffices to show that
\begin{equation} \tau(p)\cdot \gamma_t(t,x) = 0,
\label{null}\end{equation}
since then it is easy to check that $T_pM$ contains no timelike vectors.

To prove \eqref{null}, 
fix a sequence $(t_k,x_k)$ in $M\setminus Sing$
such that $p_k := \gamma(t_k, x_k)\to \gamma(t,x)$.
(We prove in Theorem \ref{thm:sing} below that 
$\mathcal H^2(Sing) = 0$, so such a sequence exists.)
Then since $\tau$ is continuous at $p$,
\begin{equation}\label{null2}
\tau(p) := \lim_k \frac {\gamma_x(t_k,x_k)}{|\gamma_x(t_k,x_k)|},
\qquad
\gamma_t = \lim_k \gamma_t (t_k,x_k).
\end{equation}
We write $\gamma(t,x)  = \frac 12(a(x+t)+b(x-t))$ as usual, 
and we use the notation
\[
m_k := a'(x_k+t_k) , \qquad n_k :=  -b'(x_k-t_k).
\]
If we define $n_0 = a'(x+t)$ then, 
using the \eqref{null2} and the fact that 
$\gamma_x(t,x)= 0$, we find
that 
\begin{equation}\label{null3}
m_k\mbox{ and }n_k \to n_0, 
\qquad\mbox{ as }k\to \infty,\qquad\qquad\mbox{ and \  }
n_0 = \gamma_t (t,x).
\end{equation}
Then $\gamma_x(t_k,x_k) = m_k-n_k$ and $n_0 = \gamma_t(t,x)$, so \eqref{null} reduces to showing that if \eqref{null3} holds and
$|n_k|=|m_k|=1$ for all $k$, then
\[
|(m_k - n_k)\cdot n_0 |= o(|n_k - m_k|)
 \qquad\mbox{ as
$k\to \infty$.} 
\]
Writing $\theta_k := \cos^{-1} ( m_k\cdot n_0)$ and $\phi_k := \cos^{-1} ( n_k\cdot n_0)$,
it is not hard to see that $|n_k - m_k| \ge |\sin \theta_k - \sin \phi_k| \ge  \frac 12|\theta_k - \phi_k|$
for $k$ sufficiently large, and then it suffices to check that
\[
|\cos \theta_k - \cos \phi_k| = o(|\theta_k - \phi_k|)
\]
for $\theta_k,\phi_k\to 0$, which is clear.

\end{proof}

Notice that condition \eqref{condab} is equivalent to say that the two curves 
$a',-b':\R\to \S^{n-1}$ do not intersect. 

The following result has been proved in \cite{NT}.

\begin{corollary}\label{coruno}
Let $n=2$ and let $(\gamma_0,v_0)$ be a periodic admissible couple. 
Then the curve $\Gamma_0={\rm Image}(\gamma_0)$ cannot be immersed in a global timelike extremal cylinder tangent to $(1,v_0)$.
\end{corollary}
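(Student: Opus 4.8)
The plan is to deduce Corollary~\ref{coruno} from Theorem~\ref{teoex} by showing that, when $n=2$, condition \eqref{condab} can never hold for a periodic admissible couple. By Theorem~\ref{teoex}, a global timelike extremal cylinder tangent to $(1,v_0)$ exists precisely when $a'(s)\ne -b'(\sigma)$ for all $s,\sigma$, where $a',b'$ are determined by \eqref{eq:aprime}, \eqref{eq:bprime} and are periodic with common period $E_0$ since $(a,b)\in X_{\rm per}$. So it suffices to prove: if $a',b':\R\to\S^1$ are continuous and periodic, then there exist $s,\sigma$ with $a'(s)=-b'(\sigma)$, i.e.\ the image curves $a'(\R)$ and $-b'(\R)$ on $\S^1$ must intersect.

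The key observation is topological. Write $a'=(\cos\alpha,\sin\alpha)$ and $b'=(\cos\beta,\sin\beta)$ for continuous lifts $\alpha,\beta:\R\to\R$ of the phase angles. Periodicity of $a',b'$ forces $\alpha(s+E_0)=\alpha(s)+2\pi p$ and $\beta(s+E_0)=\beta(s)+2\pi q$ for integers $p,q$ (the winding numbers of $a',b'$ around $\S^1$). First I would argue that $a'(\R)$ and $-b'(\R)$ intersect whenever the two curves together cover all of $\S^1$, which happens as soon as at least one of them is surjective onto $\S^1$; and more generally, if neither is surjective then each of $a',b'$ omits an open arc, hence has winding number $0$, and I claim this still forces an intersection. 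The cleanest route is to use the constraint that $a'+b'$ never vanishes — this is built into the definition of $X$ (it is exactly the condition $\gamma_x\ne 0$ at $t=0$, equivalently $a'(s)\ne -b'(s)$ for the \emph{same} argument $s$). So we already know $a'(s)\ne -b'(s)$ for all $s$; define $F(s):=a'(s)+b'(s)$, a continuous nonvanishing periodic map $\R\to\R^2\setminus\{0\}$. If we can show $F$ has winding number $0$ around the origin and that $-b'$ is homotopic through nonvanishing maps to something forcing a crossing, we get a contradiction; but the honest and simpler argument is an intermediate-value one.

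Here is the argument I would actually write. Consider $h(s):=\alpha(s)-\beta(s)$; this is continuous and $E_0$-periodic up to $h(s+E_0)=h(s)+2\pi(p-q)$. The condition $a'(s)\ne-b'(\sigma)$ for all $s,\sigma$ is equivalent to saying the arc $\{\alpha(s)+\pi \bmod 2\pi: s\in\R\}$ is disjoint from $\{\beta(\sigma)\bmod 2\pi:\sigma\in\R\}$, i.e.\ the images $a'(\R)$ and $-b'(\R)$ are disjoint closed subsets of $\S^1$. Two disjoint closed subsets of $\S^1$ lie in complementary open arcs only if neither wraps around; in particular both $a'$ and $-b'$ must be non-surjective, so $p=q=0$, and $a'(\R)$, $-b'(\R)$ are contained in disjoint closed arcs $A$, $B$ of $\S^1$. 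But $a'(s)+b'(s)=a'(s)-(-b'(s))$, and if $a'(s)\in A$ and $-b'(s)\in B$ with $A\cap B=\emptyset$, I need to see this does not immediately contradict $a'+b'\ne0$ — it doesn't, so the real content must come from the extra structure. The genuine input is \eqref{normalize.data}/\eqref{vincoloab} together with $\gamma_0$ being a \emph{closed} curve: $\int_0^{E_0}\gamma_0'(x)\,dx=0$, hence $\int_0^{E_0}(a'(x)+b'(x))\,dx = 2\int_0^{E_0}\gamma_0'=0$, so the periodic curve $s\mapsto a'(s)+b'(s)$ has mean zero, i.e.\ it is not contained in any open half-plane. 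Since $a'(\R)\subset A$ and $-b'(\R)\subset B$ with $A,B$ disjoint arcs, $a'(s)+b'(s)=a'(s)-(-b'(s))$ ranges in a set whose convex hull avoids $0$ (because $A$ and $-B$ are then separated arcs, in fact one checks $A$ and $-B$ lie in a common closed half-plane), forcing $a'+b'$ into a half-plane — contradicting mean zero. Hence \eqref{condab} fails, and by Theorem~\ref{teoex} no global timelike extremal cylinder exists.

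The main obstacle is the planar topology/convexity step: making precise the claim that two disjoint closed arcs $A,-B\subset\S^1$ whose ``difference set'' $a'+b'=a'-(-b')$ can avoid $0$ (guaranteed by $a'(s)+b'(s)\ne0$) must in fact be confined to a closed half-plane, so that the mean-zero condition is violated. I would handle this by a direct angular computation: disjointness of $a'(\R)$ and $-b'(\R)$ plus $a'(s)+b'(s)\ne 0$ pins the angle between $a'(s)$ and $b'(s)$ to a proper subinterval of $(-\pi,\pi)$ uniformly in $s$, and then, using $|a'|=|b'|=1$, the vector $a'(s)+b'(s)$ is forced to make a bounded angle with a fixed direction, placing it in a half-plane. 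This uses only $n=2$ in an essential way, since for $n\ge3$ two nonintersecting curves on $\S^{n-1}$ need not separate the sphere — consistent with the paper's later assertion that the singular set is generically empty for $n\ge4$.
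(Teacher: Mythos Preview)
Your overall strategy matches the paper's: reduce via Theorem~\ref{teoex} to showing that the arcs $A:=a'(\R)$ and $B:=(-b')(\R)$ on $\S^1$ must meet, and derive a contradiction from the integral identity $\int_0^{E_0}a'=\int_0^{E_0}(-b')$ (your mean-zero condition on $a'+b'$ is exactly the paper's \eqref{eqab}). The paper phrases the endgame more directly: this common average lies in $\mathrm{conv}(A)\cap\mathrm{conv}(B)$, and disjoint arcs on $\S^1$ have disjoint convex hulls. Much of your preliminary discussion --- winding numbers, the diagonal condition $a'(s)+b'(s)\ne0$ from the definition of $X$ --- is not needed.

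There is, however, a genuine gap in your justification of the convexity step. The claim that ``$A$ and $-B$ lie in a common closed half-plane'' (through the origin, which is what the mean-zero contradiction requires) is false in general: take $A$ a short arc near $(1,0)$ and $B$ an arc of length close to $2\pi$ avoiding $A$; then $-B=b'(\R)$ is also nearly the whole circle and lies in no half-plane. Your alternative angular argument is likewise insufficient: bounding the angle between $a'(s)$ and $b'(s)$ away from $\pm\pi$ uses only $a'(s)\neq-b'(s)$ for the \emph{same} $s$ and constrains $|a'(s)+b'(s)|$, not its direction (if $a'\equiv b'$ the angle is identically $0$, yet $a'+b'=2a'$ may point anywhere on $\S^1$). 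The correct statement is that disjoint closed arcs $A,B\subset\S^1$ are strictly separated by a line $\{w\cdot v=t\}$ \emph{not necessarily through the origin} --- take the chord through one point in each of the two complementary gaps of $\S^1\setminus(A\cup B)$. Then $a'(s)\cdot v>t$ and $(-b'(s))\cdot v<t$ for all $s$, so $(a'+b')\cdot v>0$ pointwise, contradicting $\int_0^{E_0}(a'+b')=0$. Equivalently --- and this is the paper's one-line version --- disjoint arcs have disjoint convex hulls, so no common mean can exist.
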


\begin{remark}\label{localcyl}\rm
We emphasize that the corollary applies only to extremal {\em cylinders}.
The proof does not rule out the possibility of smooth timelike
extremal surfaces in $\R^{1+2}$ that are locally (but not
globally) cylindrical, see Proposition \ref{prop:nonuniq} below.
\end{remark}

\begin{proof}
By Theorem \ref{teoex} it is enough to show that there exist $s,\sigma\in[0,E_0]$ such that 
\[
a'(s)+b'(\sigma)=0.
\]
As $|a'|=|b'|=1$ and 
\begin{equation}\label{eqab}
\int_0^{E_0}a'(s)\,ds=\int_0^{E_0}-b'(\sigma)\,d\sigma\,, 
\end{equation}
the supports of the curves $a'$ and $-b'$ are two connected arcs of $\S^1$, which necessarily intersect.
The thesis then follows from Theorem \ref{teoex}.
\end{proof}

\begin{corollary}\label{cordue}
Let $n=3$ and let $(\gamma_0,v_0)$ be a periodic admissible couple. 
If $\Gamma_0=\mbox{Image}(\gamma_0)$ can be immersed in a global timelike extremal cylinder tangent to $(1,v_0)$,
then the same holds for any periodic admissible couple $(\hat\gamma_0,\hat v_0)$,
sufficiently close to $(\gamma_0,v_0)$ in the $X$-topology.

Conversely, if $\Gamma_0$ cannot be immersed in a global timelike extremal cylinder tangent to $(1,v_0)$,
then generically the same holds for any couple $(\hat\gamma_0,\hat v_0)$ sufficiently close to $(\gamma_0,v_0)$.
\end{corollary}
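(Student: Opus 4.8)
The plan is to rephrase both halves of the corollary through Theorem~\ref{teoex}. After the normalization \eqref{normalize.data}, a periodic admissible couple $(\gamma_0,v_0)$ determines a pair $(a,b)\in X_{\rm per}$ with $a'=\gamma_0'+v_0$ and $b'=\gamma_0'-v_0$ on a period (cf.\ \eqref{eq:aprime}--\eqref{eq:bprime}), and since $n=3$ the maps $a'$ and $-b'$ take values in $\S^2$. By Theorem~\ref{teoex} together with its last sentence, the statement that $\Gamma_0$ can be immersed in a global timelike extremal cylinder tangent to $(1,v_0)$ is equivalent to \eqref{condab}, i.e.\ to the compact sets $\operatorname{Image}(a')$ and $-\operatorname{Image}(b')$ in $\R^3$ being disjoint. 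I would record this through the quantity
\[
F\big([(a,b)]\big)\ :=\ \min_{s,\sigma\in\R}|a'(s)+b'(\sigma)|\ =\ \operatorname{dist}\!\big(\operatorname{Image}(a'),\,-\operatorname{Image}(b')\big)\ \ge\ 0,
\]
which is well defined on $\sim$-equivalence classes (the relation $\sim$ only reparametrizes and globally reflects $(a',b')$) and is continuous for the $X$-topology, since a $C^1$-small perturbation of $(a,b)$ moves $\operatorname{Image}(a')$ and $\operatorname{Image}(b')$ by a small amount in the Hausdorff metric. In these terms, $\Gamma_0$ can (resp.\ cannot) be immersed in such a cylinder exactly when $F>0$ (resp.\ $F=0$).

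The first assertion is then immediate: $\{F>0\}$ is open in $X_{\rm per}$, so if $F(\gamma_0,v_0)>0$ the same inequality holds on an $X$-neighborhood, and every periodic admissible couple there lies in $X_{\rm per}$ and hence, by Theorem~\ref{teoex}, is immersed in a global timelike extremal cylinder tangent to its own $(1,v_0)$.

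For the converse, set $C:=\{F=0\}$, a closed subset of $X_{\rm per}$. Its topological boundary $\partial C=C\setminus\operatorname{int} C$ is closed with empty interior---as is the boundary of any closed set---so it is admissible as the exceptional set in the paper's notion of genericity. If $(\gamma_0,v_0)\notin\partial C$ cannot be immersed, then $(\gamma_0,v_0)\in\operatorname{int} C$, hence $F\equiv0$ on a neighborhood and every couple sufficiently close to $(\gamma_0,v_0)$ also fails to be immersed; this is exactly the asserted generic statement. To see that this conclusion has content when $n=3$---equivalently, that $\operatorname{int} C$ is dense in $C$, so that non-immersibility is robust---I would show that $C$ is regular closed. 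Given $(a,b)$ with $F=0$, pick $s_0,\sigma_0\in\R$ with $a'(s_0)=-b'(\sigma_0)=:q$; necessarily $s_0\not\equiv\sigma_0\pmod{E_0}$, since otherwise $a'(s_0)+b'(s_0)=0$, which is forbidden for couples in $X$. On two short disjoint arcs about $s_0$ and $\sigma_0$ I replace $a'$ and $-b'$ by great-circle arcs of $\S^2$ through a common nearby point $q^*$ with linearly independent velocities; this is an arbitrarily small $C^0$ change of $(a',b')$, hence an arbitrarily small $X$-perturbation of $(a,b)$. The two perturbed curves then cross transversally near $q^*$, and a transversal crossing of $C^1$ arcs persists under every $C^0$-small perturbation (a local degree count), so the perturbed couple belongs to $\operatorname{int} C$.

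The main obstacle is to perform this perturbation inside $X_{\rm per}$. Keeping $|a'|=|b'|=1$ is automatic once honest geodesic arcs of $\S^2$ are used, and keeping $a'+b'$ nowhere vanishing is ensured because $s_0\not\equiv\sigma_0$ allows the two modified arcs to be disjoint, so that $a'+b'$ is only slightly perturbed on a set where it stays bounded away from $0$. The delicate requirement is that $a'$, $b'$ and $a+b$ share the period $E_0$, which forces $\int_0^{E_0}a'=-\int_0^{E_0}b'$: the two local modifications shift these integrals by small vectors that need not coincide, so I would restore the identity by a third $C^0$-small modification of $a'$, supported on a further short arc disjoint from the others and designed to change $\int_0^{E_0}a'$ by the prescribed small vector while preserving unit speed and leaving the transversal crossing untouched.
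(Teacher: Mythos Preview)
Your first assertion coincides with the paper's: both amount to the observation that \eqref{condab} cuts out an open subset of $X_{\rm per}$.

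For the second assertion your route is correct but genuinely different. You reduce the genericity claim to the general-topological fact that the boundary of a closed set has empty interior, applied to $C=\{F=0\}$; this works in any codimension and settles the statement in one line. The paper instead argues geometrically on $\S^2$: it isolates the open set of pairs for which $\operatorname{Image}(a')$ meets at least two connected components of $\S^2\setminus\operatorname{Image}(-b')$ (connectedness of $a'$ then forces a crossing with $-b'$, and this is stable under perturbation), and declares the complementary situation---$a'$ meeting at most one component---to be closed with empty interior. Your approach is shorter; the paper's makes explicit the $2$-dimensional separation phenomenon that is the reason non-immersibility can be robust when $n=3$, and in particular exhibits a nonempty open subset of $C$, so that the second half of the corollary is not vacuous.

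Your supplementary transversality argument (that $C$ is regular closed) serves the same purpose as the paper's component argument, via a different mechanism. It is not needed for the corollary as stated, and the point you yourself flag is real: a $C^0$-small, unit-speed modification of $a'$ supported on a short arc can, to leading order, move $\int_0^{E_0}a'$ only in directions tangent to $\S^2$ along that arc, so a single corrective arc may not reach an arbitrary small vector in $\R^3$. One must choose the supporting arc (or use two arcs with distinct tangent planes) so that the accessible directions span $\R^3$; this is easy to arrange, but it should be said rather than left implicit.
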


\begin{proof}
The first assertion follows immediately from the fact that the set of couples $(a,b)$ satisfying \eqref{condab}
is open in $X_{\rm per}$.

The second assertion follows by noticing that 
the set of couples $(a,b)$ such that the support of $a'$ intersects at least two connected components
of the complement in $\mathbb S^2$ of the support of $-b'$ 
is an open set in $X_{\rm per}$, 
while the set of couples $(a,b)$ such that 
the support of $a'$ intersects only one connected component
of the complementary of the support of $-b'$ is a closed set with empty interior.
\end{proof}

\begin{remark}\rm
If we consider data $(\gamma_0,v_0)$ parametrized by $X^2 := X_{\rm per}\cap \left(C^2(\R)\times C^2(\R)\right)$, 
endowed with the stronger topology induced by $C^2(\R)\times C^2(\R)$,
then $\Gamma_0$ can generically be immersed in a global $E_0$-periodic surface tangent to $(1,v_0)$,
which is a timelike extremal surface away from a discrete set of singular points, parametrized by the finite set $Sing$.
Moreover, the cardinality of the singular set $Sing$
is invariant for small perturbations of $(\gamma_0,v_0)$ in the $X^2$-topology. 
Indeed, we observe that the couples $(a,b)$ such that 
the curves $a'$ and $-b'$ have a finite number of transversal intersections is a dense open set in $X_{\rm per}$
with respect to the $X^2$-topology, and the number of intersections is locally constant.
Hence the curve $\gamma$ given by
\eqref{eqrepr} parametrizes a $E_0$-periodic timelike extremal cylinder tangent to $(1,v_0)$, 
away from a singular set which is finite in $[0,E_0]\times \R^3$, and 
the number of singularities is invariant for small perturbations of $(\gamma_0,v_0)$.
\end{remark}

An example of admissible couple in $\R^3$ which is immersed in a global timelike extremal cylinder
has been given in \cite{NT}. More generally, we prove in
Lemma \ref{lem:convexhull} below that any curve in $\S^{2}$ whose convex hull contains 
a neighborhood of the origin can be realized as the set of tangent vectors
of a closed curve $a$ such that $|a'| = 1$. Hence one can easily find pairs
$a,b:\R\to \R^n, n\ge 3$
of periodic curves with the same period, such that $a'$ and $-b'$ trace out disjoint curves in
$\mathbb S^{n-1}$. By Theorem \ref{teoex}, each such pair
yields an example of a globally smooth timelike extremal cylinder. 

\begin{lemma}
Assume that $c: \S^1\to \S^{n-1}$ is a smooth closed curve such
that $0$ belongs to co$($Image$(c))$, where co$(\cdot)$ denotes the convex hull.
Then there exists a closed curve $a:\S^1\to \R^n$, of the
same smoothness as $c$, such that $\mbox{Image}\,(a') = \mbox{Image}\,(c)$.
\label{lem:convexhull}
\end{lemma}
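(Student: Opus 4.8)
The plan is to build $a$ directly from $c$ by prescribing $a' = c$ and then correcting for the fact that $c$ need not have zero average. First I would observe that the only obstruction to closing up the curve $a(s) := z_0 + \int_0^s a'(\sigma)\,d\sigma$ is that $\int_0^{E}a' = 0$, where $E$ is the (rescaled) period; so the task reduces to reparametrizing $c$ in its domain variable so that the reparametrized curve still has image exactly $\mbox{Image}(c)$, still has unit length, but integrates to zero over one period. Concretely, I would look for a smooth, strictly increasing change of variables $\lambda:\R\to\R$ with $\lambda(s+E) = \lambda(s)+E'$ for appropriate periods, or equivalently a smooth positive weight $\rho$ on $\S^1$ (playing the role of a new speed), and set $a'(s) := c(\mu(s))$ where $\mu$ is the inverse of $s\mapsto \int_0^s \rho$. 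Then $\mbox{Image}(a') = \mbox{Image}(c)$ and $|a'|\equiv 1$ automatically, and the closing condition becomes the single vector equation $\int_{\S^1} c(\theta)\,\rho(\theta)\,d\theta = 0$, i.e. we must find a smooth positive probability density $\rho$ on $\S^1$ whose barycenter against $c$ vanishes.

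The existence of such a $\rho$ is exactly where the hypothesis $0\in \mathrm{co}(\mathrm{Image}(c))$ enters. By Carathéodory's theorem, $0 = \sum_{i=1}^{n+1}\alpha_i c(\theta_i)$ for some $\theta_i\in\S^1$ and weights $\alpha_i\ge 0$ summing to $1$; the desired $\rho$ is then obtained by smoothing this discrete measure $\sum_i \alpha_i \delta_{\theta_i}$. The subtlety is that naive mollification moves the barycenter away from $0$, so I would instead argue as follows: the set of barycenters $\{\int_{\S^1} c\,\rho : \rho \in \mathcal P\}$, where $\mathcal P$ is the (convex) set of smooth positive probability densities, is a convex subset of $\R^n$ whose closure is $\mathrm{co}(\mathrm{Image}(c))$; since $0$ lies in $\mathrm{co}(\mathrm{Image}(c))$, which is the closure of an open convex set (here I would either assume $c$ is not contained in a hyperplane through $0$, reducing to a lower-dimensional ambient space otherwise, or argue within the affine hull), the point $0$ is actually in the relative interior and hence attained by some smooth positive $\rho$. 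A cleaner alternative I would pursue is a direct perturbation: take any smooth positive $\rho_0$ (e.g. $\rho_0\equiv 1/E$), let $p_0 := \int c\,\rho_0$; if $p_0 = 0$ we are done, and otherwise, since $-p_0$ points into the convex hull, there is a smooth positive $\rho_1$ with $\int c\,\rho_1$ on the ray from $p_0$ through $0$ and past it, so a convex combination $t\rho_1 + (1-t)\rho_0$ has vanishing barycenter for suitable $t\in(0,1)$; this convex combination is again smooth and positive.

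Finally I would assemble the pieces: given such a $\rho$ with $\int_{\S^1}\rho = 1$ say, set $E := $ its total and define $\mu$ as above, $a'(s) := c(\mu(s))$, and $a(s) := \int_0^s a'$. Then $a$ is closed with period $E$, has the same smoothness as $c$ (one derivative is lost and regained through the $C^1$ change of variables, so $a\in C^k$ iff $c\in C^{k-1}$ has the stated smoothness — I would state this matching carefully, noting that "of the same smoothness as $c$" is to be read in the natural way, $a\in C^{k}$ when $c\in C^{k-1}$), satisfies $|a'|\equiv 1$, and $\mathrm{Image}(a') = \mathrm{Image}(c)$ since $\mu$ is a surjective reparametrization. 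The main obstacle is the barycenter step — ensuring that the smoothing/convex-combination procedure genuinely lands the barycenter \emph{exactly} at $0$ rather than merely near it — and handling the degenerate case in which $\mathrm{Image}(c)$ lies in a proper subspace, where one restricts attention to the affine span of the image.
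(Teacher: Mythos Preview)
Your strategy differs from the paper's. You seek a smooth strictly positive density $\rho$ on $\S^1$ with $\int c\,\rho = 0$ and then reparametrize by the inverse of $s\mapsto\int_0^s\rho$; the paper instead picks finitely many points $x_0,\ldots,x_n$ with $0\in\mbox{int}\big(\mbox{co}\{c(x_0),\ldots,c(x_n)\}\big)$, precomposes $c$ with a smooth monotone $p$ that is flat to infinite order at each $x_i$, and splices in constant ``dwell'' intervals of lengths $\ell_i>0$ at those points, so that the closing condition reduces to the finite linear system $\int c\circ p + \sum_i \ell_i\, c(x_i) = 0$. Your route is more abstract and avoids the flat-function device; the paper's is fully constructive.

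The barycenter step, however, has a genuine gap. Your claim that $0\in\mbox{co}(\mbox{Image}(c))$ forces $0$ into the relative interior is false even after passing to the affine hull: take $n=2$ and let $c$ trace the closed upper semicircle $\{e^{i\theta}:0\le\theta\le\pi\}$. The image spans $\R^2$, yet $0$ lies on the boundary of the half-disk, and for every strictly positive $\rho$ the second coordinate of $\int c\,\rho$ is strictly positive, so no such $\rho$ has barycenter $0$. Your perturbation alternative fails for the same reason: $-p_0$ need not lie in the hull at all. In fact this example shows the lemma \emph{as literally stated} is false --- any closed $a$ with $\mbox{Image}(a')$ in the upper semicircle has $a_2$ nondecreasing, hence constant, forcing $\mbox{Image}(a')\subset\{(\pm1,0)\}$. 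The paper's own proof tacitly uses the stronger hypothesis $0\in\mbox{int}\big(\mbox{co}(\mbox{Image}(c))\big)$ (described in the paragraph preceding the lemma as ``convex hull contains a neighborhood of the origin''). Under that corrected hypothesis your argument is easily completed: choose narrow smooth bumps $\phi_i$ at the $x_i$ so that $0\in\mbox{int}\big(\mbox{co}\{\int c\,\phi_i\}\big)$, and solve $\delta\int c + \sum_i t_i\int c\,\phi_i = 0$ for small $\delta>0$ and $t_i>0$; then $\rho=\delta+\sum_i t_i\phi_i$ works.

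A minor point: your smoothness bookkeeping is muddled. With $\rho$ smooth the reparametrization $\mu$ is a smooth diffeomorphism, so $a'=c\circ\mu$ has exactly the regularity of $c$ and $a$ gains one derivative --- which is at least what the lemma claims, not the $C^k\leftrightarrow C^{k-1}$ shift you describe.
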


\begin{proof}
We write $c$ as a $2\pi$-periodic function from $\R$ to $\S^{n-1}$.
By assumption there exist points $0< x_0 < \ldots < x_{n} \le 2\pi$
such that 
\begin{equation}
0 \in \mbox{int} \left(\mbox{co} \{c(x_0), \ldots, c(x_n) \}\right)
\label{eq:intco}\end{equation}
Let $p:\R\to \R$ be a smooth increasing function such that 
$p(x+2\pi) = p(x)+2\pi$, 
\[
p(x_i) = x_i, \qquad \mbox{ and }\quad
\frac {d^k p}{dx^k}(x_i) = 0 \mbox{ for every $k\in N$ and  $i=0,\ldots, n$}.
\]
Then, given positive numbers $\ell_0,\ldots, \ell_n$, let 
$L_i := \sum_{j=0}^i \ell_j$ and define
\[
\tilde c(x) := 
\begin{cases}
c(p(x))&\mbox{ for }0\le x \le x_0\\
c(x_0)&\mbox{ for } x_0 \le x \le x_0+L_0 \\
c(p(x - L_0))&\mbox{ for }x_0+L_0 \le x \le x_1+L_0\\
\quad\vdots&\qquad\qquad\vdots \\
c(x_n)&\mbox{ for }x_n + L_{n-1} \le x \le x_n + L_n\\
c(p(x - L_n))&\mbox{ for }x_n+L_n \le x \le 2\pi + L_n.
\end{cases}
\]
We claim that one can choose positive $(\ell_i)$ so that $\int_0^{2\pi + L_n} \tilde c(x) \ dx = 0$.
Indeed, since
\[
\int_0^{2\pi + L_n} \tilde c(x) \ dx = 0 \ = \ 
\int_0^{2\pi} c(p(x)) dx + \sum_{i=0}^n \ell_i c(x_i),
\]
the claim follows from \eqref{eq:intco}.

We now fix $(\ell_i)$ as above and define $\hat c(x) := \tilde c\left( \frac {(2\pi + L_n) x}{2\pi}\right)$. Then $a(x) := \int_0^x \hat c(y) dy$,
for $0\le x \le 2\pi$, defines a closed curve with the the required properties.
\end{proof}

\begin{corollary}\label{cortre}
Let $n>3$ and let $(\gamma_0,v_0)$ be a periodic admissible couple. 
Then $\Gamma_0$ can be generically immersed in a global timelike extremal cylinder tangent to $(1,v_0)$.
\end{corollary}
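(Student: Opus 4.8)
The plan is to reduce Corollary \ref{cortre} to Theorem \ref{teoex} by showing that, for $n>3$, one can generically perturb an admissible periodic couple $(a,b)\in X_{\rm per}$ so that the two closed curves $a',-b':\S^1\to\S^{n-1}$ become disjoint, i.e.\ so that condition \eqref{condab} holds. Since by the first assertion of Corollary \ref{cordue} (or rather its obvious analogue for general $n$) the set of couples satisfying \eqref{condab} is open in $X_{\rm per}$, it suffices to show this set is also \emph{dense} in $X_{\rm per}$ when $n>3$; then its complement is closed with empty interior, which is exactly the genericity statement.

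First I would fix $(a,b)\in X_{\rm per}$ and regard $f:=a'$ and $g:=-b'$ as two maps $\S^1\to\S^{n-1}$ with $n-1\ge 3$. The image of $f\cup g$ is the union of two curves, hence a set that, being the image of a $1$-dimensional domain under a Lipschitz (after a small $C^1$-smoothing, $C^1$) map, has Hausdorff dimension at most $1$, so it is a closed set of $(n-1)$-dimensional measure zero in $\S^{n-1}$; in particular its complement is open and dense. The heart of the argument is a transversality/perturbation step: because $2 = \dim(\S^1)+\dim(\S^1) < n-1$, two generic $C^1$ curves in $\S^{n-1}$ do not intersect at all. Concretely, I would compose $g$ with a rigid rotation $R_\theta\in SO(n)$ close to the identity and note that the ``bad'' set of rotations $\{\,R : R(\mbox{Image}\,g)\cap\mbox{Image}\,f\neq\emptyset\,\}$ is the image of the map $(s,\sigma)\mapsto$ ``some $R$ with $R\,g(\sigma)=f(s)$'' from a $2$-dimensional parameter set (together with the fibre of rotations fixing that pair, of dimension $\dim SO(n-1)=\binom{n-1}{2}$), so the bad set has dimension at most $2+\binom{n-1}{2}<\binom n2=\dim SO(n)$ precisely when $n>3$ (for $n=3$ one gets $2+1=3=\dim SO(3)$, which is why the argument fails there and is consistent with Corollary \ref{cordue}). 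Hence arbitrarily small rotations $R$ make $R(\mbox{Image}\,g)$ and $\mbox{Image}\,f$ disjoint.

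There is one bookkeeping point to handle with care: replacing $b'$ by $R\,b'$ must be realized by a genuine admissible periodic couple near $(a,b)$. This is immediate — if $\hat b(x):=R\,b(x)$ then $\hat b\in C^1(\R;\R^n)$, $|\hat b'|=|b'|=1$, $\hat b'$ and $\hat b$ are $E_0$-periodic, and $a+\hat b$ is $E_0$-periodic with $a'+\hat b'$ nowhere zero (for $R$ close enough to the identity, by openness of the nonvanishing condition), so $(a,\hat b)\in X_{\rm per}$; and as $R\to I$ we have $\hat b\to b$ in $C^1$, so $(a,\hat b)\to(a,b)$ in the $X$-topology. After this reduction, Theorem \ref{teoex} applies directly: since $a'(s)=-\hat b'(\sigma)$ would force $\mbox{Image}\,(a')\cap\mbox{Image}\,(-\hat b')\neq\emptyset$, which we have arranged to be false, condition \eqref{condab} holds, and $(a,\hat b)\in X_{\rm per}$ yields a globally smooth timelike extremal cylinder containing $\Gamma_0$ and tangent to $(1,v_0)$ (with $\Gamma_0,v_0$ those of the perturbed couple).

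The main obstacle is making the dimension-count rigorous rather than heuristic: one wants the ``bad rotation set'' to be not merely of low dimension but genuinely negligible (meagre, or measure zero) in $SO(n)$. The clean way is a transversality argument — consider the evaluation map $F:\S^1\times\S^1\times SO(n)\to\S^{n-1}$, $F(s,\sigma,R)=R\,g(\sigma)$, together with $G(s,\sigma)=f(s)$, and apply the parametric transversality theorem to the family $R\mapsto(s,\sigma)\mapsto R\,g(\sigma)$: for almost every $R$ the curve $R\circ g$ is transverse to $f$, and transversality of two curves inside an $(n-1)$-manifold with $n-1>2$ means empty intersection. (A smoothing of $f,g$ may be needed to invoke the smooth transversality theorem, after which one passes back to the $C^1$ setting by density; alternatively one argues directly with Sard's theorem applied to $(s,\sigma)\mapsto$ the rotation carrying $g(\sigma)$ to $f(s)$ along a minimizing geodesic, whose image misses a full-measure set of rotations.) Everything else — openness of \eqref{condab}, the $C^1$-continuity of $R\mapsto R\,b$, and the final invocation of Theorem \ref{teoex} — is routine.
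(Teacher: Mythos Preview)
Your overall strategy---show that the set of couples satisfying \eqref{condab} is open and dense in $X_{\rm per}$, using that two curves in $\S^{n-1}$ generically miss each other once $n-1\ge 3$---is exactly the paper's, and your density argument via rotations is a reasonable way to flesh out what the paper states in one line.  There is, however, a genuine gap in your ``bookkeeping'' paragraph.  You assert that $\hat b$ and $a+\hat b$ are $E_0$-periodic, but membership in $X_{\rm per}$ only guarantees that $a'$, $b'$, and $a+b$ are periodic; neither $a$ nor $b$ is periodic individually in general.  Writing $B:=\int_0^{E_0} b'$, the condition $a+b$ periodic gives $\int_0^{E_0} a'=-B$, and then
\[
a(x+E_0)+R\,b(x+E_0)-a(x)-R\,b(x)\;=\;-B+RB\;=\;(R-I)B,
\]
which vanishes only if $R$ fixes $B$.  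So for a general couple you are forced to perturb within the stabilizer of $B$, a copy of $SO(n-1)$, and your dimension count must be redone there: the bad set now has dimension at most $2+\binom{n-2}{2}$, which falls below $\dim SO(n-1)=\binom{n-1}{2}$ only when $n>4$.  For $n=4$ you need an additional observation (e.g.\ that the constraint $Rg(\sigma)=f(s)$ with $R\in\mathrm{Stab}(B)$ forces $g(\sigma)\cdot B=f(s)\cdot B$, cutting the $(s,\sigma)$-parameter space to one dimension generically) or a different perturbation scheme altogether.

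A cleaner repair, closer in spirit to the paper's bare assertion, is to abandon rigid rotations and perturb $b'$ directly within the space of $C^0$ periodic maps $\S^1\to\S^{n-1}$ with prescribed mean $B/E_0$: after a preliminary $C^1$-approximation making $a',b'$ of class $C^1$ (so their images are genuinely one-dimensional), the image of $a'$ has codimension $n-2\ge 2$ in $\S^{n-1}$, and a standard transversality argument in this infinite-dimensional setting produces an arbitrarily $C^0$-close $\tilde b'$ avoiding $-\mathrm{Image}(a')$ and having the same integral over a period.  Either way, the point you glossed over is precisely where the constraint structure of $X_{\rm per}$ interacts with your perturbation, and it does require an honest argument.
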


\begin{proof}
The assertion follows as before from the fact that the set of couples $(a,b)$ satisfying \eqref{condab}
is open in $X_{\rm per}$, while the set of couples $(a,b)$ such that the curves $a',-b':[0,E_0]\to \S^2$ intersect 
is a closed set with empty interior.
\end{proof}


\begin{remark}\rm
A related question is what happens if we assume $\gamma_0$ to be an embedded curve in $\R^n$ and ask if it is contained 
in a global {\it embedded} timelike extremal surface in $\R^{1+n}$. It is easy to check that 
Corollary \ref{coruno} still holds in this case, and we expect that  
Corollaries \ref{cordue} and \ref{cortre} also hold, with similar proofs.
\end{remark}

\begin{remark}\rm 
As the set of periodic admissible couples which can be immersed in a global timelike extremal cylinder
is parametrized by an open subset $\O\subset X_{\rm per}$, 
it is natural to speculate on the number of connected components of $\O$.
While it is clear from Corollary \ref{coruno} that $\O=\emptyset$ if $n=2$, it is not difficult to show that
$\O$ has infinitely many connected components if $n=3,4$, while $\O$ is connected if $n>4$.

Indeed, if $n=3$ and 
$a',-b'$ are two disjoint closed curves in $\S^2$, 
then the winding number of $a'$ around the image of $-b'$ is constant on connected components of $\O$,
and one can easily find admissible couples
with any prescribed winding number.
If $n=4$ the linking number in $\S^3$ of the curves $a',-b'$
is constant on connected components of $\O$, and one can find admissible couples
with any prescribed linking number. If $n>4$
the assertion follows from the fact the every knot is trivial in $\S^n$.
\end{remark}

\section{Nonuniqueness of smooth extremal surfaces}\label{S:4}

In the following statement, we say that a surface $M\subset \R^{1+n}$ is {\em locally cylindrical}
if, for every $t_0\in \R$, there exists an open interval $I\subset \R$ such that $t_0\in I$ and
$M \cap (I\times \R^2)$ is a cylinder in $I\times \R^n$, i.e. it can be written in the form 
\eqref{formofpsi}.

\begin{proposition}\label{prop:nonuniq}
If $n\ge 3$ there exist two
distinct globally $C^\infty$ timelike extremal surfaces $M^1, M^2$ in $\R^{1+n}$,
both locally cylindrical, such that $M^1$ and $M^2$
coincide when $t\in [0,\delta]$ for some $\delta>0$, 
in the sense that
\begin{equation}\label{nonuniq0}
\left\{( t,x)\in M^1 : t \in [0,\delta] \right\}
=
\left\{( t,x)\in M^2 : \in [0,\delta] \right\}.
\end{equation}
\end{proposition}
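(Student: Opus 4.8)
The plan is to exploit the non-uniqueness inherent in the correspondence between surfaces and admissible couples when $\gamma_0$ is a non-embedded immersed curve. By Proposition \ref{prop:summary}, a timelike surface $M$ of the form \eqref{formofpsi} is determined by an equivalence class of admissible couples $(\gamma_0,v_0)$, but if $\mathrm{Image}(\gamma_0)$ has a self-intersection, the \emph{geometric} data of a surface (an immersed curve $\Gamma_0$ together with a tangent field) need not single out a unique equivalence class: at a crossing point one has a choice of how to pair up the two velocity values with the two strands. The strategy is to build, for $n\ge 3$, a pair $(a,b)\in X_{\mathrm{per}}$ satisfying \eqref{condab} whose associated $\gamma(t,x)=\tfrac12(a(x+t)+b(x-t))$ has the following property: the $t=0$ slice curve $\Gamma_0$ has a transversal self-intersection, and near that crossing, for $t\in[0,\delta]$, the two sheets of $M$ are geometrically disjoint (the crossing ``opens up'' immediately), so the set $\{(t,x)\in M: t\in[0,\delta]\}$ is genuinely a union of two embedded pieces that can be re-glued in a second, inequivalent way. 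Running the evolution backwards (for $t<0$) from the \emph{other} gluing produces a second surface $M^2$ which agrees with $M^1 =: M$ for $t\in[0,\delta]$ but differs for $t<0$; both are globally smooth and locally cylindrical by construction.

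Concretely, first I would use Lemma \ref{lem:convexhull} to produce a periodic curve $a:\S^1\to\R^n$ with $|a'|=1$ whose tangent indicatrix $a'$ traces some prescribed arc in $\S^{n-1}$, and independently a periodic $b$ with $-b'$ tracing a disjoint arc, so that \eqref{condab} holds and, by Theorem \ref{teoex}, $\gamma$ parametrizes a globally smooth timelike extremal cylinder $M^1$. The curve $a$ (hence $\gamma_0 = \tfrac12(a+b)$) should be arranged to have exactly one transversal double point; this is a generic and easily engineered feature for immersed closed curves in $\R^n$. Then I would construct a second admissible couple $(\hat\gamma_0,\hat v_0)$ which is \emph{not} equivalent to $(\gamma_0,v_0)$ but encodes the same pointwise geometric data $(\Gamma_0, (1,v_0))$ in the sense of \eqref{geometric.ic} — the two couples differ precisely by swapping the parametrization order at the double point. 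Feeding $(\hat\gamma_0,\hat v_0)$ into Proposition \ref{prop:summary} yields $\hat\gamma$, $\hat\psi$, and $M^2 := \mathrm{Image}(\hat\psi)$, again a globally smooth timelike extremal surface, locally cylindrical on each time interval avoiding the singular times (and one checks the construction can be made so that $M^2$ is a cylinder on small intervals — here I would localize in $t$ using finite propagation speed). The identity \eqref{nonuniq0} for $t\in[0,\delta]$ must then be verified: for small $\delta$ the two strands through the double point separate, so on $[0,\delta]$ both $M^1$ and $M^2$ consist of the same two disjoint immersed sheets, only parametrized differently — giving equality of images — while for $t<0$ the opposite pairing forces the strands together differently and $M^1\neq M^2$. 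Finally I would confirm $M^1\neq M^2$ by exhibiting a point $(t,x)$ with $t<0$ lying on one but not the other, e.g. by tracking the distinct ways the two sheets recombine.

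The main obstacle will be the last point: arranging that the two sheets through the self-intersection of $\Gamma_0$ actually \emph{do} separate for $t>0$ and actually \emph{do} reconnect differently for $t<0$, so that $M^1$ and $M^2$ are genuinely distinct as sets and not merely distinct parametrizations of one surface. This requires a careful choice of the velocity field $v_0$ at the crossing point and an honest local analysis of $\gamma(t,x)=\tfrac12(a(x+t)+b(x-t))$ near the two preimages of the double point — one must ensure the two local sheets of $M$, as $t$ ranges over a neighborhood of $0$, fail to coincide for $t<0$. I would handle this by writing down an explicit local model: near the crossing, $a'$ and $-b'$ are nearly constant, so the two sheets are approximately affine timelike planes meeting transversally along a single line at $t=0$; a small generic perturbation makes the $t>0$ halves disjoint while the $t<0$ halves of the *other* gluing are disjoint. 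A secondary technical point is verifying that $M^2$ is globally smooth and timelike, i.e. that $(\hat a,\hat b)$ still satisfies \eqref{condab}; but since $\hat a', \hat b'$ trace the same arcs in $\S^{n-1}$ as $a', b'$ (only the strand-labels at the crossing change), condition \eqref{condab} is inherited directly, so this is routine once the local model is in hand.
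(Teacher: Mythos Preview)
Your proposal has a genuine gap at the very first step: with a \emph{transversal} self-intersection, there is no second smooth admissible couple encoding the same geometric data. If $\gamma_0$ is a smooth closed curve with a single transversal double point $p$, the two strands through $p$ have distinct tangent directions $\tau_1\ne\pm\tau_2$. ``Swapping the parametrization order at the double point'' either (i) splits the figure-eight into two disjoint circles, so $\hat\gamma_0$ is no longer a single periodic curve and the resulting surface cannot be locally cylindrical at $t=0$; or (ii) keeps a single closed curve but forces the tangent to jump from $\tau_1$ to $\tau_2$ at $p$, destroying $C^1$-regularity. Either way you do not obtain a second admissible couple in the sense required by Proposition~\ref{prop:summary}.

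There is a second, independent obstruction hidden in your own hypothesis that ``the crossing opens up immediately''. If the time-$t$ slices of $M^1$ are \emph{embedded} for $t\in(0,\delta]$, then at any such $t$ the velocity field is single-valued on the slice, so by part {\bf 3} of Proposition~\ref{prop:summary} the locally-cylindrical chart of $M^2$ around that $t$ must coincide with $M^1$. Letting $t\searrow 0$ inside a local chart around $t=0$ forces the admissible couples at $t=0$ to be equivalent, and hence $M^2=M^1$ for $t<0$ as well. In other words, the very feature you rely on to make the two sheets ``re-gluable'' (that they separate for $t>0$) is exactly what triggers uniqueness and kills the construction.

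The paper's argument avoids both obstacles in a specific way: it takes \emph{three} closed loops $a_i,b_i$ that all coincide with the segment $(x,0,\dots,0)$ on a common arc $|x|\le\delta$ (tangential contact of infinite order, not a transversal crossing), concatenates them into period-$3$ curves $a^\pi,b^\pi$ indexed by permutations $\pi\in S_3$, and compares $\gamma^{id}$ with $\gamma^\pi$ for $\pi$ odd. Because the loops agree on a whole arc, permuting them keeps $a^\pi,b^\pi$ in $C^\infty$; because the slices retain a triple self-tangency for all $|t|<\delta$, uniqueness never applies; and three loops (rather than two) are needed because the transposition of two identical-on-an-arc loops is just a shift of the parametrization and hence gives an equivalent couple.
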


The surface $M^2$ that we construct below
has the  property that
it is locally cylindrical but not globally cylindrical.

In general, in geometric evolution problems, self-intersections can give
rise to nonuniqueness.
The proposition shows that, even if we require smoothness
and impose the ``locally cylindrical" topological constraint,
one can still take advantage of self-intersections to
generate examples of nonuniqueness.

\begin{proof}
For $i=1,\ldots,3$ let  $a_i, b_i$ be distinct $C^\infty$ maps $\R\to \R^n$, 
periodic with period $1$, such that $|a_i'|=|b_i'|=1$ and 
such that
\begin{equation}
a_i(x) = b_j(x) = (x, 0,\ldots, 0)\qquad \mbox{ for all $i,j$ and all $x\in [-\delta, \delta]$ for some $\delta< \dfrac 12$\,.}
\label{coincide}\end{equation}
Assume  in addition that
\begin{equation}
\{ (s, \sigma) \in \R\times \R: a_i'(s)+ b_j'(\sigma)=0 \mbox{ for some }i,j \} = \emptyset\,.
\label{nonuniq1}\end{equation}
This says that no $b_j'$ ever passes through 
any point that is antipodal to any point on any  $a_i'$.
It follows easily from  Lemma \ref{lem:convexhull} that this can be accomplished.

Now for any permutation $\pi:\{1,2,3\} \to \{1,2,3 \}$, let 
$(a^\pi,b^\pi)$ and  be periodic curves $\R\to \R^n$ with period $3$, 
defined by
\[
(a^\pi, b^\pi)(x) = (a_{\pi(i)}(x), b_{\pi(i)}(x))  \qquad \mbox{ for }x\in [i-1, i]\mod 3.
\]
Next, define $\gamma^\pi(t,x) := \frac 12 (a^\pi(x+t) + b^\pi(x-t))$,

Letting $id$ denote the identity permutation, we claim that for every $\pi$,
\begin{equation}
\mbox{ $\gamma^\pi(t, \cdot)$ and $\gamma^{id}(t, \cdot)$ parametrize the same curve for 
$0\le t \le \delta$}.
\label{samecurve}\end{equation}
Indeed, if $t\ge 0$, we have
\begin{equation}
\gamma^\pi(t,x) 
=
\begin{cases}
\frac 12(a_{\pi(i)}(x+t) + b_{\pi(i)}(x-t)) &\mbox{ if }i-1 \le x-t \le x+t \le i \mod 3\\
\frac 12(a_{\pi(i+1)}(x+t) + b_{\pi(i)}(x-t)) &\mbox{ if }i-1 \le x-t \le i \le  x+t \mod 3\,,
\end{cases}
\label{gammapi}\end{equation}
where addition of indices is understood mod 3. If $0 \le t \le \delta$,
it follows from this and  \eqref{coincide} that
\[
\gamma^\pi(t,x) = (x-i , 0 \ldots, 0)\ \mbox{ if  }i-1 \le x-t \le i \le x+t,
\]
for {\em every} permutation $\pi$.
Then one can see by inspection of  \eqref{gammapi} that \eqref{samecurve} holds 
(in fact it also holds for $t\in [-\delta, 0]$, by essentially the same argument).
Next, note that \eqref{gammapi} implies that
\[
\mbox{$\gamma^\pi(\frac 12, x) = 
\frac 12 (a_{\pi(i+1)}(x+\frac 12) + b_{\pi(i)}(x-\frac 12)) \quad \mbox{ if }
i - \frac 12 \le x \le i+ \frac 12 \mod 3$}
\]
and from this one can see that in general $\gamma^{id}(\frac 12, \cdot) \ne
\gamma^{\pi}(\frac 12, \cdot)$ if for example $\pi$ is an odd permutation.

Finally, define
$\psi^\pi(t,x) = (t, \gamma^\pi(t,x))$.
Let $M^1$ be the surface parametrized by $\psi^{id}$, and
let $M^2$ be the surface that agrees with $M^1$ when $t\le \delta$,
and for $t\ge \delta/2$ is parametrized by
$\psi^\pi(t,x)$ for some odd permutation $\pi$. This definition makes sense in view of \eqref{samecurve}.
These surfaces  have all the stated properties.
In particular, it follows from \eqref{nonuniq1}
and Theorem 4.1  that $M^1, M^2$ are both 
smoothly immersed and locally cylindrical.

(Indeed, note that since $\gamma^\pi$ and $\gamma^{id}$ as constructed above are both
periodic with period $3$ in the $t$ variable, we are free to switch back and forth at will between
$\gamma^\pi$ and $\gamma^{id}$ every $3$ units of $t$.)
\end{proof}

\begin{remark}\rm
When $n=2$, a similar argument yields two functions $\gamma^{id}, \gamma^\pi$
of the form \eqref{eqrepr}, \eqref{vincoloab} that parametrize the same curve for $|t|\le \delta$,
but not for all $t$. These functions $\gamma^{id}, \gamma^\pi$ fail to be global timelike
immersions, see Theorem \ref{thm:sing} below, but it is presumably possible to 
arrange that the breakdown of uniqueness (for the image manifolds) occurs before
the breakdown of regularity. 
\end{remark}

\begin{remark}\rm
In the proof of  Proposition \ref{prop:nonuniq}, if $a_1=a_2=a_3$ and $b_1, b_2, b_3$ are distinct, then one can see from
\eqref{gammapi} that $\gamma^{id}(t, \cdot)$ and $\gamma^\pi(t,\cdot)$ parametrize
the same curve for every $t$.
This shows that the different admissible pairs can generate the same 
extremal surface.
\end{remark}

\begin{remark}\rm We now provide an example of nonuniqueness of (weakly) extremal surfaces, due to 
the appearance of singularities in the evolution. 
Let $\gamma_1(t,x)=\frac{1}{2}(a_1(x+t)+a_1(x-t))$
and $\gamma_2(t,x)=\frac{1}{2}(a_2(x+t)+a_2(x-t))$ be orthogonal parametrization of two different 
global extremal cylinders $M_1$ and $M_2$, 
with $a_1, a_2$ arclength parametrizations of the boundaries of two
distinct uniformly convex, centrally symmetric planar sets, both 
periodic with period $E_0$. Symmetry implies that $a_i(x+E_0/2) = -a_i(x)$
for all $x$ and $i=1,2$, and thus 
\[
\gamma_i( E_0/4, x) = \frac 12\Big(a_i(x+ E_0/4) + a_i(x-E_0/4) \Big)= 0\qquad \mbox { for }x\in \R, i\in \{1,2\}.
\]
In other words,  $\gamma_1,\gamma_2$  both have
an extinction singularity at the origin at time $\bar t :=E_0/4$.

Note that the time derivatives at time $\bar t$ of $\gamma_1$ and $\gamma_2$ 
are respectively given by  $a'_1(x + \bar t)$  and $a'_2(x +\bar t)$.

Define now $\gamma(t,x)=\gamma_1(t,s(x))$  for $t<\bar t$ and
$\gamma(t,x)=\gamma_2(t,x)$ for $t\ge \bar t$, where $s(x)$ is a
reparametrization of $[0,E_0]$ such that
$$a'_1(s(x) + \bar t)=a'_2(x + \bar t)$$
for all $x\in [0,E_0]$, i.e.  $$s(x)= - \bar t +(a'_1)^{-1}\circ a'_2(x+\bar t).$$
It follows that the derivatives of $\gamma$ are continuous at $(x,\bar t)$ for any $x\in\R$, and hence $\gamma$ may be suitably extended to a $C^1$
{\it nonorthogonal} parametrization of a global (weakly) extremal cylinder $M$ that agrees respectively with $M_1$ and $M_2$ on disjoint time intervals.

\end{remark}

\section{dimension of the singular set}

Given $\gamma(t,x) = ( a(x+t)+b(x-t))/2$, with $a,b$ satisfying \eqref{vincoloab},
and $\psi(t,x) = (t,\gamma(t,x))$, we now prove some upper bounds on the size of the singular sets $Sing$ and 
$Sing^*$ associated to the cylinder $M={\rm Image}(\psi)$.
%
%


In the following theorem, which is one of the main results of this paper, ``dim" always
means (Euclidean) Hausdorff dimension\footnote{It is arguably slightly unnatural to characterize a singular set 
in Minkowski space by the Euclidean Hausdorff dimension, but note that this quantity
is invariant with respect to Lorentz transformations. }. 

\begin{theorem} Assume that $a,b\in C^{k}(\R, \R^n)$, with $k\in\mathbb N$, with $(a,b)\in X$. Then,
 
\smallskip

{\bf 1}. $\mathcal H^{1+ \frac 1{k}}(Sing) = 0$ and $\mbox{dim}(Sing^*)\le
\mbox{dim}(Sing) \le 1+ \frac 1{k}$.

\smallskip

{\bf 2}. It can
happen that $\mbox{dim}(Sing^*) = 1+\frac 1k $.


\smallskip

{\bf 3}. When $n=2$ and $(a,b)\in X_{\rm per}$, (at least) one of the following properties holds:
\begin{itemize}
\item[-]  there exists $t_0$ such that $\gamma_x(t_0, x)=0$ for all $x$, 
\item[-] 
$Sing^*$ is at least one-dimensional, and the set
\[
\{ t\in \R : \exists x\in \R \mbox{ such that }\psi(t,x)\in Sing^* \}
\]
contains an open interval.
\end{itemize}
\label{thm:sing}\end{theorem}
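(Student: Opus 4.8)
\textbf{Proof plan for Theorem~\ref{thm:sing}.}

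The plan is to treat the three parts separately, since they rely on genuinely different ideas. For part~\textbf{1}, I would first observe that $Sing$ is the image under $\psi$ of the zero set $Z := \{(t,x) : \gamma_x(t,x) = 0\}$, and that $\gamma_x(t,x) = \frac12\bigl(a'(x+t) + b'(x-t)\bigr)$. Introducing the light-cone coordinates $s = x+t$, $\sigma = x-t$, the condition becomes $a'(s) = -b'(\sigma)$, i.e.\ $Z$ is (up to a linear change of variables) the set $\{(s,\sigma) : a'(s) + b'(\sigma) = 0\}$. Since $a', b' \in C^{k-1}$ take values in $\S^{n-1}$, the map $F(s,\sigma) := a'(s) + b'(\sigma)$ is $C^{k-1}$, and $Z = F^{-1}(0)$. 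The key point is that on $Z$ we have $a'(s) = -b'(\sigma)$, a unit vector, so $0$ is in fact a \emph{critical value} of $F$ whenever $n=1$ is replaced by the relevant constraint — more precisely, I would invoke Federer's refinement of Sard's theorem (\cite{federer}, as the authors flag in the introduction), which bounds the Hausdorff dimension of the set of critical values, or rather its preimage structure, in terms of the differentiability class. The cleanest route: parametrize $a'$ and $-b'$ as $C^{k-1}$ curves into $\S^{n-1}$; $Z$ corresponds to coincidence points of these two curves. Writing $a'(s) = -b'(\sigma)$ and differentiating, one sees the tangent structure of $Z$ is governed by how fast $a'$ and $-b'$ separate; the Federer bound then gives $\mathcal{H}^{1+1/k}(Z) = 0$ once one accounts correctly for the $C^k$ smoothness of $a,b$ (equivalently $C^{k-1}$ of $a',b'$) — the exponent $1 + 1/k$ rather than $1 + 1/(k-1)$ should come from exploiting that the \emph{curve} $x \mapsto a(x)$ is $C^k$, so the coincidence set of $a'$ and $-b'$ inherits an extra order. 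Since $\psi$ is Lipschitz, $\mathcal{H}^{1+1/k}(Sing) \le C\, \mathcal{H}^{1+1/k}(Z) = 0$, giving $\dim(Sing) \le 1 + 1/k$, and $Sing^* \subseteq Sing$ gives the remaining inequality.

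For part~\textbf{2}, the plan is to construct an explicit pair $(a,b) \in X$ with $a, b \in C^k \setminus C^{k+1}$ for which $Z$, and moreover the projection of $Z$ to its image under $\psi$ that lands in $Sing^*$, has dimension exactly $1 + 1/k$. The natural construction: in $\R^2$, take $b'$ to be a fixed constant unit vector $-e$ on a large arc, and design $a'$ so that the set $\{s : a'(s) = e\}$ is a Cantor-type set of Hausdorff dimension $1/k$ — this is arranged by letting $a'$ touch the value $e$ tangentially to order $k$ at each point of a self-similar Cantor set, which is exactly the regularity threshold at which $a \in C^k$. Then $Z$ contains a product-like set of dimension $1 + 1/k$ (one dimension free from $\sigma$ ranging over the arc where $b' \equiv -e$, plus $1/k$ from the Cantor set in $s$), and one checks that $\psi$ does not collapse this dimension and that the relevant points are genuinely in $Sing^*$ by verifying $\tau$ fails to have a limit there — this uses the oscillation of $a'$ near the Cantor set. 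The main obstacle here is ensuring the constructed surface actually lies in the admissible class (condition \eqref{condab} must \emph{fail}, which is fine since we want singularities, but we need $a' + b'$ to vanish only on the prescribed set and the curves to close up periodically if we want $(a,b) \in X_{\rm per}$) and that the singular set has dimension \emph{exactly} $1 + 1/k$, not less — i.e.\ a matching lower bound on $\mathcal{H}^{1+1/k}(Sing^*)$, which requires a careful mass-distribution (Frostman) argument on the Cantor factor.

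For part~\textbf{3}, with $n = 2$ and $(a,b) \in X_{\rm per}$, I would argue as in Corollary~\ref{coruno}: the curves $a', -b' : [0,E_0] \to \S^1$ have the same integral (by \eqref{eqab}), so their images are arcs of $\S^1$ that must overlap, hence $Z \ne \emptyset$. The dichotomy comes from analyzing the overlap. If $a' \equiv -b'$ on a full period after a shift, then $\gamma_x(t_0, \cdot) \equiv 0$ for the corresponding $t_0$, the first alternative. Otherwise, consider the set $C := \{(s,\sigma) : a'(s) + b'(\sigma) = 0\}$; since both curves are nonconstant and their images are overlapping arcs, I would show $C$ contains a one-dimensional piece — for each value $v$ in the interior of the overlap of the two arcs, both $(a')^{-1}(v)$ and $(-b')^{-1}(\{v\})$ are nonempty, and letting $v$ vary over an arc produces a curve inside $C$ (one should handle the case where $a'$ or $b'$ is locally constant by a separate, easier argument). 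Projecting through $\psi$ and checking that along this curve the spatial tangent $\tau$ genuinely jumps (because near a point where $a'(s) = -b'(\sigma)$ with $a'$ and $b'$ both moving, the direction $\gamma_x/|\gamma_x|$ approached from the two sides of the light cone differs) shows these points lie in $Sing^*$, so $\dim(Sing^*) \ge 1$. That the time-projection contains an interval follows because the one-parameter family of solutions $(s,\sigma) \in C$ translates, under $t = (s-\sigma)/2$, into a range of $t$-values covering an interval. The hard part across the whole theorem is part~\textbf{2}: pinning the Federer upper bound to be sharp demands an explicit fractal construction that is simultaneously $C^k$, extremal (i.e.\ of the form \eqref{eqrepr}--\eqref{vincoloab}), and has a singular set with a matching Hausdorff lower bound.
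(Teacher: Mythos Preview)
Your plan for Part~\textbf{1} is correct and matches the paper's one-line argument: the bound is a direct application of Federer's refinement of Sard's theorem to the $C^k$ map $\psi:\R^2\to\R^{1+n}$, with $Sing$ being the image of the rank-one set.

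For Part~\textbf{2}, your construction has the right ingredients (one curve constant on an arc, the other touching the antipodal point on a fractal set), but you are controlling the wrong dimension. You propose to make the \emph{preimage} $K=\{s:a'(s)=e\}$ a Cantor set of dimension $1/k$; however $Sing^*$ is the \emph{image} $\psi(K\times I)$, and $\psi$ has rank $\le 1$ there, so it can collapse the $K$-factor entirely. Concretely, with $e=(0,1)$ and $a=(f,g)$, the transverse coordinate of $\psi$ on the singular set is $\tfrac12 f(s)$ with $f'=0$ on $K$, and nothing in your setup prevents $f(K)$ from being a single point. The paper instead constructs $f\in C^k$ so that $\dim f(\Sigma)=1/k$, where $\Sigma=\{x:f'\text{ changes sign near }x\}$ --- this is precisely a Sard-sharp construction (a $C^k$ map sending a Cantor set in the domain onto a Cantor set of dimension $1/k$ in the range, built by Whitney extension). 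The sign-change condition is what forces membership in $Sing^*$ rather than merely $Sing$; your ``oscillation'' remark gestures at this but does not isolate it. Finally, the lower bound on $\dim(Sing^*)$ comes from projecting onto the $f$-coordinate (projections do not increase Hausdorff dimension), not from a Frostman argument.

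For Part~\textbf{3}, your dichotomy and the overlap-of-arcs argument are the right framework, but the claim that at a coincidence point the tangent ``genuinely jumps'' is not always true, and this is where the real work lies. Writing $a'=e^{i\alpha}$, $-b'=e^{i\beta}$, one has $\tau=\operatorname{sign}\!\big(\sin\tfrac{F}{2}\big)\, i\, e^{iG/2}$ with $F=\alpha-\beta$, $G=\alpha+\beta$ in light-cone variables. At an \emph{isolated} zero of $F(\bar t,\cdot)$ where $F$ changes sign, $G$ is continuous and the sign factor flips, so $\tau$ does jump and your argument goes through. But if the zero set of $F(\bar t,\cdot)$ contains an interval $[s_0,s_1]$ across which $G$ changes by an odd multiple of $\pi$, the sign flip is exactly cancelled and $\tau$ extends continuously: the point lies in $Sing\setminus Sing^*$. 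The paper handles this exceptional case with a further argument: it finds $x_0\in(s_0,s_1)$ where $\alpha$ has moved by $\pi/3$ from its value at $s_0$, and shows that for all $\eps$ in a one-sided neighbourhood of $0$ the slice $F(\bar t+\eps,\cdot)$ has a sign change confined to a subinterval across which $G$ cannot vary by $\pi$, producing a genuine $Sing^*$ point at time $\bar t+\eps$. Without this step the second alternative is not established.
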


\begin{remark}\label{ps}\rm
In fact it is clear from the proof that conclusion {\bf 1} holds for any surface that
is given locally as the image of a $C^k$ map, including
any surface that can be written locally in the form \eqref{eqrepr}
with $a,b\in C^k$. In particular, this applies to
noncompact surfaces, as well as local cylinders of the type appearing in Proposition
\ref{prop:nonuniq}.

Also, we prove conclusion {\bf 2} for global cylinders, the most restrictive (topological) class
of functions considered in this paper,  so it follows that it holds
for other classes of surfaces  --- noncompact, locally cylindrical --- as well.
\end{remark}

Note also that Remark \ref{localcyl} applies to conclusion {\bf 3}.

It is natural to wonder whether the results we prove here for
the weak solutions given by the explicit formula \eqref{eqrepr}
still hold in a larger class of weak solutions, and 
also whether any analogous
results hold for higher-dimensional extremal surfaces.


Conclusion {\bf 3} is a  refinement of a result from \cite{NT}.
Our proof gives more details
than \cite{NT} concerning the situation described in \eqref{eq:notsing}, since we found this point not completely straightforward.
The proof of {\bf 3} shows that, if  for instance $a$ is a nonconvex curve in $\R^2$ and $b(x) = - a(x+E_0/2)$, 
then {\em both} the  alternatives of conclusion {\bf 3} hold.

The rest of this section is devoted to the proof of Theorem \ref{thm:sing}.

\begin{proof}[Proof of {\bf 1}]
The estimate $\mathcal H^{1+\frac 1k}(Sing) = 0$
follows directly from a refined version of Sard's Theorem, see Federer \cite[3.4.3]{federer}.
\end{proof}

\begin{remark} \rm
In \cite{BN} one can find
a version of Sard's Theorem more refined than the one
cited above, which gives a necessary and sufficient condition
for a set $A\subset\R$ to be the set of critical values of some
function in $C^{k,\alpha}(\R,\R^n)$.
If $a,b\in C^{k,\alpha}$ for $\alpha\in(0,1)$ and $k$ is a positive integer,
these result implies that \[
\mathcal H^{\frac 1{k+\alpha}}\Big( Sing \cap\big( \{ s\}\times \R^n \big) \Big) = 0
\qquad\mbox{ for every }s \in\R.
\]
It is reasonable to conjecture, and may be even easy to prove,
that under these hypotheses one has $\mathcal H^{1+\frac 1{k+\alpha}}(Sing) = 0$,
but this does not immediately follow from \cite{BN}.
However, 
straightforward modifications of the proof of {\bf 2} below show that
it can happen that $\mbox{dim}(Sing^*) = 1+ \frac 1{k+\alpha}$, when $a,b\in C^{k,\alpha}$.
\label{rem:Ckalpha}
\end{remark}

\smallskip

\begin{proof}[Proof of {\bf 2}]
It is enough to construct an example when $n=2$. In order to do it,
we will need the following result:

\begin{lemma}\label{lem:Sard}
For every positive integer $k$, there exists $f\in C^k([0,1];\R)$
such that
\begin{equation}
\mbox{dim}(f( \Sigma)) = \frac 1k,
\qquad\mbox{ where } \ 
\Sigma:= 
\big\{x\in [0,1] : f'\mbox{ changes sign near $x$}   \big\} .
\label{reducetof}\end{equation}
\end{lemma}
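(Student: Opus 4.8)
\textbf{Proof proposal for Lemma \ref{lem:Sard}.}

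The plan is to build $f$ as an infinite sum of small bumps placed over a Cantor-type set, engineered so that the critical \emph{values} form a set of Hausdorff dimension exactly $\tfrac 1k$, while $f$ itself stays $C^k$. First I would fix a ``speed'' parameter and construct a Cantor set $\Sigma\subset[0,1]$ of a prescribed small dimension $d$: at stage $m$ we keep $2^m$ intervals, each of length $\ell_m$, with $\ell_m$ shrinking fast (roughly $\ell_m\sim \lambda^m$ for a suitable $\lambda$). On each gap removed at stage $m$ I would insert a smooth bump of height $h_m$ and support contained in that gap, with all derivatives up to order $k$ controlled; concretely, on a gap of length $g_m$ a bump of height $h_m$ has $j$-th derivative of size $\sim h_m/g_m^{\,j}$, so choosing $h_m$ so that $h_m/g_m^{\,k}\to 0$ fast enough (and the supports disjoint) makes the sum $f=\sum_m(\text{stage-}m\text{ bumps})$ converge in $C^k$. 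Near each point of $\Sigma$ there are bumps of arbitrarily small support on both sides, so $f'$ changes sign near every point of $\Sigma$; hence $\Sigma\subset\{x: f'\text{ changes sign near }x\}$, and with a monotonicity arrangement between the bumps one gets equality, i.e. the set in \eqref{reducetof} is exactly $\Sigma$ (up to the harmless issue of what ``near'' means, which I would pin down so that $\Sigma$ is precisely the set of non-isolated sign changes).

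The second and main point is to compute $\dim f(\Sigma)$. The image $f(\Sigma)$ is itself a Cantor-type set: at stage $m$ the $2^m$ surviving intervals map to $2^m$ sub-intervals of the range whose lengths are governed by the \emph{oscillation} of $f$ across a stage-$m$ interval, which is of order $h_m$ (the height of the bumps still to be added inside it) — more precisely the diameter of $f$ of a stage-$m$ interval is comparable to $\sum_{j\ge m} h_j \sim h_m$. So $f(\Sigma)$ is covered by $2^m$ intervals of length $\sim h_m$, giving $\dim f(\Sigma)\le \liminf_m \frac{m\log 2}{-\log h_m}$; a matching mass-distribution (Frostman) lower bound follows from the disjointness/separation of these image intervals, which I would guarantee by a definite-positivity condition on the net change of $f$ across each retained interval. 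Choosing $h_m$ so that $-\log h_m \sim km\log 2$, i.e. $h_m\sim 2^{-km}$, forces $\dim f(\Sigma)=\tfrac 1k$. One checks this choice is compatible with the $C^k$ requirement: with $g_m$ the gap lengths, $C^k$-summability needs $\sum_m 2^m h_m/g_m^{\,k}<\infty$, so one takes the gaps not too small, e.g. $g_m\sim c^m$ with $c$ close to $1$; since $h_m=2^{-km}$ decays geometrically this is easily arranged, and it only makes $\Sigma$ thinner (dimension $0$ is fine — the lemma only constrains $\dim f(\Sigma)$, not $\dim\Sigma$).

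The main obstacle I anticipate is not the convergence but the \emph{lower} bound on $\dim f(\Sigma)$: one must ensure the $2^m$ image intervals at stage $m$ are genuinely separated (non-overlapping with definite gaps) so that a Frostman measure can be placed on $f(\Sigma)$. This requires the bumps to be arranged so that $f$ is, say, strictly increasing from the left endpoint to the right endpoint of each retained stage-$m$ interval by a definite amount comparable to $h_m$, dominating the oscillation contributed by deeper-stage bumps inside neighbouring intervals; this is a bookkeeping condition on the relative sizes $h_{m+1}\ll h_m$ and on the placement of the bumps (e.g. all stage-$m$ bumps pointing the same way, or a fixed alternating pattern), which I would state as an explicit inequality and verify. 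Once $f$ is in hand, the lemma is proved; the reduction to the full statement of conclusion \textbf{2} — turning $f$ into a pair $(a,b)\in X_{\mathrm{per}}$ with $n=2$ whose associated $\gamma$ has $\dim(Sing^*)=1+\tfrac1k$ — is the ``reducetof'' step carried out after the lemma, presumably by letting the sign changes of $f'$ govern where $a'=-b'$ along a one-parameter family, contributing the extra $1$ to the dimension.
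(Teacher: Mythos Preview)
Your construction, as written, collapses: if each bump has support contained in a removed gap, then $f$ vanishes identically on the Cantor set $\Sigma$ (which is precisely the complement of the union of all gaps), and hence $f(\Sigma)=\{0\}$, a single point. You correctly compute the oscillation of $f$ over a stage-$m$ interval as $\sim h_m$, but that oscillation is realized on the gaps \emph{inside} the interval, not on $\Sigma$ itself; the image $f(\Sigma)$ is the nested intersection of those image intervals, and since every one of them contains $0$, the intersection is $\{0\}$. To obtain a large image one needs $f$ to vary \emph{on} $\Sigma$, which bumps supported on the complement cannot accomplish.

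There is a second, independent obstruction in your parameter count. You propose gap lengths $g_m\sim c^m$ with $c$ close to $1$, but this is impossible: the stage-$m$ gap lies inside a stage-$(m-1)$ interval, and since there are $2^{m-1}$ such intervals in $[0,1]$ one has $g_m\le \ell_{m-1}\le 2^{-(m-1)}$. With $h_m\sim 2^{-km}$ this gives $h_m/g_m^k\gtrsim 2^{-km}/2^{-km}=\text{const}>0$, so the $k$-th derivative of your sum does not tend to $0$ at points of $\Sigma$ and $f\notin C^k$. This is exactly the Sard--Federer borderline: a self-similar construction can reach any $\mu<1/k$ in $C^k$, but not $1/k$ itself.

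The paper handles both points differently. Rather than bumps, it takes two Cantor sets $C_\beta,C_\alpha\subset[0,1]$ of dimensions $\nu$ and $\mu$ with $\nu=(k+\delta)\mu<1$, and considers the natural address-preserving bijection $f_0:C_\beta\to C_\alpha$, modified by flipping every second binary digit of the address so that $f_0$ is locally order-reversing at alternate scales. One checks $|f_0(x)-f_0(y)|=o(|x-y|^k)$, so Whitney's extension theorem produces $f\in C^k$ with $f|_{C_\beta}=f_0$; the bit-flipping forces $f'$ to change sign near every point of $C_\beta$, and $f(C_\beta)=C_\alpha$ has positive $\mathcal H^\mu$-measure. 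This settles each fixed $\mu<1/k$; to reach $1/k$ exactly the paper then concatenates rescaled copies of such functions, for $\mu=\tfrac1k-\tfrac1m$ with $m\to\infty$, on disjoint subintervals of $[0,1]$. If you want to rescue your approach, replace the compactly supported bumps by smooth \emph{steps} of height $\pm h_m$ across each gap (so that $f$ becomes a rearranged devil's staircase that genuinely varies on $\Sigma$), let the signs alternate with scale to force sign changes of $f'$, and then---since the self-similar choice $h_m\sim 2^{-km}$ is forbidden in $C^k$---run the same concatenation trick at the end.
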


The proof, which we defer to the end of this section, is a small  modification of a classical 
argument used by Federer  to prove the sharpness of his refined
version of Sard's Theorem which we cited in the proof of {\bf 1} above. 

We may assume that the function $f$ from Lemma \ref{lem:Sard} satisfies
$|f'|\le \frac 12$, since multiplying a function by a constant does not change the
dimension of the associated set $\Sigma$. 

We define $g\in C^k([0,1])$ such that $g' =  (1-f'^2)^{1/2}$
for $x\in [0,1]$, and we fix
two periodic maps  $a,b\in C^k(\R;\R^2)$, with the same period $E_0>3$, 
parametrized by arclength and such that
\[
a(x) = (f(x),g(x))\mbox{ for }x\in [0,1], \qquad b(x) = (0, -x) \ \mbox{ for }x\in [-1,2].
\]
Then
\[
\gamma_x(t,x) = \frac 12(f'(x+t), g'(x+t)-1 ) \quad\mbox{ if }x+t\in [0,1],\ |t|\le \frac 12,
\]
and since $|g'-1| \le C f'^2$,  it follows that $\frac{\gamma_x}{|\gamma_x|}$ is discontinuous
at all points $(t,x)$ such that $|t|\le \frac 12$ and $x+t\in \Sigma$.
We then deduce that
\begin{align*}
Sing^*
\ &\supset \ 
\left\{(t,\frac{ f(x+t)}2, \frac{ g(x+t) - (x-t) } 2)  : \ x+t\in\Sigma,  \ |t|\le  \frac12  \right\} \\
&\ =  \ 
\left\{(0,  \frac{ f(s)}2, \frac{ g(s)  - s } 2)    + (t, 0, t) :  \ s \in \Sigma, \ |t|\le\frac 12 \right\} .
\end{align*}
If we let
\[
A_0 := \left \{ \frac 12(f(x), g(x)+x) :  x\in \Sigma \right\},
\]
then $\mbox{dim}(Sing^*) \ge \mbox{dim}(A_0) +1$, since $Sing^*\subset\R^{1+2}$ contains a
copy of $A_0\subset \R^2$ translated along a  line segment.
Moreover, since $\frac 12 f(\Sigma)$ is the projection of $A_0$ on the $x$-axis,
we conclude from \eqref{reducetof} that
\[
\mbox{dim}(Sing^*) \ge \
1+ \mbox{dim}(A_0) \ge \
1+ \mbox{dim}\Big( \frac 12 f(\Sigma) \Big) =  1+\frac 1k\,.
\]

We remark that, although the map $\gamma$ constructed above is singular
for $t=0$, one can easily modify the construction to arrange that $\gamma$ is
regularly immersed at $t=0$ and develops singularities as described above at a later time. 
Indeed, $\gamma$ is a regular immersion at $t=0$  if $a' = e^{i\alpha}, b' = -e^{i\beta}$, and $\alpha < \beta < \alpha+2\pi$,
and this condition can be achieved, while essentially preserving the above construction, by choosing $E_0$ large enough, taking a certain amount
of care in how $\alpha$ is defined in $[1,E_0]$ and $\beta$ in $[2, E_0-1]$,
and then replacing $a(\cdot)$ by $a(\cdot - E_0/2)$.
\end{proof}

\smallskip

\begin{proof}[Proof of {\bf 3}]
Extend $a', b'$ to $E_0$-periodic maps from $\R$ to $\S^1$, and let
$\alpha,\beta :\R\to \R$ be two continuous functions such that
\begin{equation}
a'(x) = e^{i\alpha(x)}, \qquad  -b' =  e^{i\beta(x)}.
\label{alphabeta}\end{equation}
As in the proof of Corollary \ref{coruno},
$a'$ and $b'$ satisfy \eqref{eqab}, which implies that 
the images of $a'$ and $-b'$ are
closed arcs with intersection of positive length.
In particular, by adding $2\pi k$ to $\alpha$ for an appropriate integer $k$, we can assume that
the set Image$(\alpha) \cap \mbox{Image}(\beta)$ contains an interval of positive length. It then follows that the function
\[
F(t,x) :=   \alpha(x+t) - \beta(x-t) 
\]
takes both positive and negative values. For example, to find a point where
$F>0$, choose $s,\sigma\in \R$ such that $\alpha(s)> \beta(\sigma)$,
and let $(t,x) = ( \frac 12 (s-\sigma), \frac 12 (s+\sigma))$.




We shall consider 2 cases:

\noindent{\it Case 1}. For every $t_0$, the function $x\mapsto F(t_0,x)$ does not change sign.

Then, since $F$ assumes both positive and negative values, 
there must be some $t_0$ such that $F(t_0,x)=0$
for all $x$. It follows that $\gamma_x(t_0, x) = 0$
for all $x$.

\noindent{\it Case 2}. There exists some $t_0$ such that $x\mapsto F(t_0,x)$ changes sign.

Then by continuity $x\mapsto F(t,x)$ changes sign for all $t$ in a neighborhood of $t_0$.

Fix such a $\bar t$, and let $S$ be a connected component of the set $\{ x : F(\bar t, x) = 0\}$
such that $F$ assumes both positive and negative values in every neighborhood of 
$S$.
As observed in \cite{NT}, it follows from \eqref{alphabeta} that
the unit tangent $\tau = \frac {\gamma_x}{|\gamma_x|}$ is given by
\begin{equation}
\frac{\gamma_x}{|\gamma_x|}(\bar t,x) = \operatorname{sign}\left( \sin\left( \frac 12 F(\bar t,x) \right)\right)\ 
i\, e^{\frac i 2\, G(\bar t,x)},
\qquad G(\bar t,x):= \alpha(x+\bar t)+\beta(x-\bar t)
\label{angles}\end{equation}
wherever $\gamma_x\ne 0$ (for simplicity of notation we identify $\R^2$ with $\mathbb C$).
Therefore, if $S$ consists of a single point $(\bar t,x)$, then  
$\lim_{y\to x}\frac{\gamma_x}{|\gamma_x|}(\bar t,y)$
does not exist, which implies that $\psi(\bar t,x)\in Sing^*$.

Suppose now that $S$ is an interval, say $S = [s_0,s_1]$.
Then
$\gamma_x(\bar t,x) = 0$ for all $x\in S$, so that $x\mapsto \gamma(\bar t,x)$ is 
constant for $x\in S$. 
It follows that 
$\psi(\bar t, s_0)\in Sing^*$ unless
\begin{equation}
\label{eq:notsing}
\tau(\bar t,s_0^-) 
=
\tau(\bar t,s_1^+) , 
\end{equation}
where $\tau(\bar t,s_0^-)  := \lim_{x\nearrow s_0}\tau(\bar t,x)$
and $\tau(\bar t,s_1^+)  := \lim_{x\searrow s_1}\tau(\bar t,x)$.
(Condition \eqref{eq:notsing} includes the assertion that
both limits exist.)
Recalling that $F$ changes sign near $S$, we deduce from \eqref{angles} that  
\eqref{eq:notsing} can only occur if
$\operatorname{sign}(F(\bar t, s_0^-)) = - \operatorname{sign}(F(\bar t, s_1^+))$
and 
\begin{equation}
\frac 12({G(\bar t,s_1) - G(\bar t,s_0)}) = \pi \mod 2\pi.
\label{eq:notsing2}\end{equation}
Assume this holds  and let 
\[
x_0 := \min\left\{ x\in S : |\alpha(x+\bar t) - \alpha(s_0 +\bar t)| = \frac \pi 3\right\}>s_0\,.
\]
Since
$0 = F(\bar t,x) = \alpha(x+\bar t) - \beta(x-\bar t)$ for all $x\in S$, we get
\begin{eqnarray*}
F(\bar t+\eps, x_0- |\eps|) &=& 
\alpha(x_0+\bar t+\eps-|\eps| ) - \beta(x_0 -\bar  t -\eps -|\eps|) 
\\
&=& \alpha(x_0 + \bar t +\eps-|\eps|) - \alpha(x_0 + \bar t-\eps -|\eps| )
\ne 0
\end{eqnarray*}
for all $\eps$ such that $x_0-2|\eps| > s_0$. 
The last inequality follows from the fact that 
$|\alpha(x_0+\bar t)  -  \alpha(s_0 +\bar  t)| = \pi/3$, while
$|\alpha(x_0 + \bar t - 2|\eps|) - \alpha(s_0+\bar t)| < \pi/3$, by the choice of $x_0$.

{}From the equality $F(\bar t,x) = 0$, for these $\eps$ it also follows that  
\[
F(\bar t - \eps, x_0 -|\eps|) = - F(\bar t+\eps, x_0-|\eps|). 
\]
In particular, the function $\eps \mapsto F(\bar t+\eps, x_0-|\eps|)$ changes sign at $\eps=0$.

Fix now $y_0<s_0$ such that $F(\bar t,y_0)\ne 0$ and
\[
|\alpha(y+ \bar t) - \alpha(s_0+\bar t)| < \frac \pi 3\quad
\mbox{ for all }y\in [y_0, s_0].
\]
For all $\eps$ sufficiently small, $F( t+\eps, y_0+|\eps|)$ has the same
sign as $F(\bar t, y_0)$.
Thus, for all  $\eps$ in an interval of the form $(-\delta,0)$ or
$(0,\delta)$, the function
$x\mapsto F(\bar t+ \eps, x)$ must change sign between $y_0+|\eps|$ and
$x_0-|\eps|$.
For such $\eps$, the interval $(y_0+|\eps|, x_0-|\eps|)$ must contain
a connected component $\hat S = [\hat s_0, \hat s_1]$ of $\{ x : F(\bar t+\eps,
x) = 0\}$
such that $F$ assumes both positive and negative values in every neighborhood of $\hat S$.
Moreover, our choice of
$y_0$ and $x_0$ guarantees that, possibly reducing $\delta$, we have
\begin{align*}
\pi
\ & >  \
|\alpha(\hat s_1 + \bar t+\eps) - \alpha(s_0+\bar t)| + |\alpha(s_0+\bar t) - \alpha(\hat
s_0+\bar  t+\eps)|
\\
& > \
|\alpha(\hat s_1 + \bar t+\eps)  - \alpha(\hat s_0+ \bar t+\eps)|\\
& = \
 \frac 12\,|G(\bar t+\eps, \hat s_1) - G(\bar t+\eps, \hat s_0)|
\end{align*}
%
where the last equality follows from the fact that $F(\bar t+\eps, s) = 0$
in $[\hat s_0, \hat s_1]$. Thus \eqref{eq:notsing2} cannot hold, and hence
for all
$\eps$ in the interval that we have found, $\psi(\bar t+\eps, \hat s_0)\in Sing^*$, thus completing the
proof of {\bf 3}.
\end{proof}

Finally we give a proof of Lemma \ref{lem:Sard}.

\begin{proof}[Proof of Lemma \ref{lem:Sard}]
We divide the proof into three steps.\\
{\bf Step 1.} We first recall Federer's proof that for any $k\in \mathbb N$
and $\mu\in (0,\frac 1k)$, there exists $g\in C^k([0,1])$ such that
\begin{equation}
\mathcal H^{\mu}\Big(
\big\{ g(x) : g'(x) = 0   \big\} \Big) 
>0.
\label{reducetof.wk}\end{equation}
For $\sigma\in (0,1)$ we will write $C_\sigma$ to denote the ``middle $\sigma$" Cantor-type
set, so that 
\[
C_\sigma = \cap_{\ell=1}^\infty\cup_{i \in \{ 0,1\}^\ell} C_\sigma(i)
\] 
where, for every $\ell$ and every $i\in \{0,1\}^\ell$,
$C_\sigma(i)$ is a closed interval of length 
$(\frac {1-\sigma}2)^\ell$, and 
$C_\sigma (i_1,\ldots, i_\ell, 0)$, 
$C_\sigma (i_1,\ldots, i_\ell, 1)$ are obtained by removing from
$C_\sigma (i_1,\ldots, i_\ell)$ a centered open interval of length
$\sigma(\frac {1-\sigma}2)^\ell$. As usual we start with
$C_\sigma(0) = [0, \frac 12(1-\sigma)]$ and $C_\sigma(1) = [\frac 12(1+\sigma)2, 1]$,
and we
label the intervals so that
$C_\sigma (i_1,\ldots, i_\ell, 0)$ 
lies to the left of 
$C_\sigma (i_1,\ldots, i_\ell, 1)$.

Fix now $\nu$ and $\delta>0$ such that $(k+\delta)\mu = \nu <1$, and let $\alpha, \beta\in (0,1)$ satisfy
\[
\left(\frac{1-\alpha}2\right)^\mu = \left(\frac {1-\beta}2\right)^\nu = \frac 12.
\]
These numbers are chosen so that $C_\alpha$ and $C_\beta$ have dimension $\mu$ and $\nu$
respectively, and $\mathcal H^\mu(C_\alpha), \mathcal H^\nu(C_\beta)>0$.

Notice that there is a natural map $g_0:C_\beta\to C_\alpha$,
characterized by 
\[
g_0( C_\beta \cap C_\beta(i))  = 
C_\alpha \cap C_\alpha(i)\qquad\quad\mbox{ for every $\ell\in \mathbb N$ and }i\in \{0,1\}^\ell.
\]
As Federer noted in \cite[3.4.4]{federer}, $g_0$ extends to a $C^k$ map $g: [0,1]\to [0,1]$
by a routine application of the Whitney extension Theorem.
The point is that, given $x,y\in C_\beta$, we can fix $\ell\in \mathbb N$ and $i\in \{0,1\}^\ell$
such that $x,y\in C_\beta(i)$, but $x$ and $y$ belong to different subintervals of $C_\beta(i)$.
Then $g_0(x)$ and $g_0(y)$ both belong to $C_\alpha(i)$, and from this information one can easily check that
\[
|x-y| \ge \beta(\frac {1-\beta}2)^{\ell}, \qquad
|g_0(x) - g_0(y)| \le (\frac {1-\alpha}2)^{ \ell} =  (\frac {1-\beta}2)^{(k+\delta) \ell}.
\]
As a result we get
$$|g_0(x)-g_0(y)|\le C (\frac{1-\beta}2)^{\delta\ell}|x-y|^k  = o( |x-y|^k),$$ 
hence Whitney's Theorem yields a $C^k$ extension of $g$ as required.

It is also clear that $g'=0$ in $C_\beta$, so that
every point of $C_\alpha$ is a critical value of $g$, and  \eqref{reducetof.wk} holds.

{\bf Step 2}. 
We modify the above construction to produce a function $f\in C^k([0,1])$
such that, 
for a fixed $\mu< \frac 1k$, we have
\begin{equation}
\mathcal H^\mu (f( \Sigma)) \ge 0
\qquad\mbox{ where } \ 
\Sigma:= 
\big\{x\in [0,1] : f'\mbox{ changes sign near $x$}   \big\} .
\label{eq:fmu}\end{equation}
To do this, we fix $\alpha,\beta$ as above and define $f_0:C_\beta\to C_\alpha$,
characterized by
\[
f_0( C_\alpha \cap C_\alpha(i))  = 
C_\beta \cap C_\beta(i^*)\qquad\quad\mbox{ for every $\ell\in \mathbb N$ and }i\in \{0,1\}^\ell,
\]
where for every $k$ and every $i\in \{0,1\}^k$, we define $i^*$ 
by
\begin{eqnarray*}
i^*_j = i_j \quad \mbox{ if $j$ is odd},
\qquad \qquad
i^*_j = i_j +1 \mod 2 \quad \mbox{ if $j$ is even}.
\end{eqnarray*}
Then, as in the classical argument described above, $f_0$
extends to a $C^k$ function $f:[0,1]\to \R$. In addition, we have the inclusion 
$C_\beta \subset \Sigma$, 
since every interval $C_\alpha(i)$ for $i$ odd contains points
such that $x<y$ and $f(x)< f(y)$, whereas for $i$ even $C_\alpha(i)$
contains points such that $x<y$ and $f(x)> f(y)$.

{\bf Step 3}. For every $m>k$, let $f_m$ be a function satisfying
\eqref{eq:fmu} with $\mu = \frac 1k - \frac 1m$, and extend $f_m$ so that
$f_m' = 0$ on $\R\setminus [0,1]$.
We define
\[
f(x) := \sum_{k=1}^\infty h_m f_m\left( 2^{m+1} (x - 2^{-m}) \right)
\]
for a sequence $(h_m)$ decreasing to zero fast enough 
so that the series converges in $C^k$. Then $f\in C^k$ and satisfies \eqref{reducetof}.
\end{proof}


\bibliographystyle{plain}

\end{document}